\documentclass[11pt]{amsart}
\usepackage{amssymb, latexsym}

\theoremstyle{plain}
\newtheorem{theorem}{Theorem}

\newtheorem{proposition}{Proposition}

\theoremstyle{remark}

\newtheorem*{Remark 1}{Remark 1}
\newtheorem*{Remark 2}{Remark 2}
\newtheorem*{Remark 3}{Remark 3}
\newtheorem*{Remark 4}{Remark 4}

\numberwithin{equation}{section}


\begin{document}

\title[ Random Walk Excited By Its Recent History ]%
 { The Speed of a Random Walk Excited By Its Recent History}

\author{ Ross G. Pinsky}
\address{Department of Mathematics\\
Technion---Israel Institute of Technology\\
Haifa, 32000\\ Israel} \email{ pinsky@math.technion.ac.il}
\urladdr{http://www.math.technion.ac.il/~pinsky/}

\subjclass[2010]{60J10,60F15 } \keywords{random walk with internal states, excited random walk}
\date{}

\begin{abstract}
Let $N$ and $M$ be positive integers satisfying $1\le M\le N$, and let $0<p_0<p_1<1$.
Define a process $\{X_n\}_{n=0}^\infty$ on $\mathbb{Z}$ as follows. At each step, the process jumps
either one step to the right or one step to the left, according to the following mechanism. For the first $N$ steps,
the process behaves like a  random walk that jumps to the right with probability $p_0$ and to the left
with probability $1-p_0$. At subsequent steps the jump
mechanism is defined as follows: if at least $M$ out of the $N$ most recent   jumps were
to the right, then the probability of jumping to the right is $p_1$; however, if fewer than $M$ out of the $N$ most recent  jumps were to the right,
then the probability of jumping to the right is $p_0$.
We calculate the speed of the process. Then we let $N\to\infty$ and  $\frac MN\to r\in[0,1]$, and calculate
the limiting speed. More generally, we consider the above questions for a random walk with a finite number $l$
of threshold levels, $(M_i,p_i)_{i=1}^l$, above the pre-threshold level $p_0$, as well as for one model with $l=N$ such thresholds.
\end{abstract}

\maketitle
\section{Introduction and Statement of Results}\label{S:intro}
Over the past couple of decades, quite a number of papers have been devoted to the study
of edge or vertex reinforced random walks and excited  (also known as ``cookie'') random walks.
These processes have a simple underlying transition mechanism---such as simple symmetric random walk---but this
mechanism is ``reinforced'' or ``excited''  depending on the location of the random walk and its complete  history at that location.
For survey papers which include many references, see \cite{P} and \cite{KZ}.

In this paper, we consider random walks with a different kind of excitement. These
random walks are excited by their recent history, irrespective of their present location.
Let $N$ and $M$ be positive integers satisfying $1\le M\le N$, and let $0<p_0<p_1<1$.
Define a process $\{X_n\}_{n=0}^\infty$ on $\mathbb{Z}$ as follows. At each step, the process jumps
either one step to the right or one step to the left, according to the following mechanism. For the first $N$ steps,
the process behaves like a  standard random walk that jumps to the right with probability $p_0$ and to the left
with probability $1-p_0$. At subsequent steps the jump
mechanism is defined as follows: if at least $M$ out of the $N$ most recent   jumps were
to the right, then the probability of jumping to the right is $p_1$; however, if fewer than $M$ out of the $N$ most recent  jumps were to the right,
then the probability of jumping to the right is $p_0$.
We call this process a \it\ random walk excited by its recent history.\rm\ We call $N$ the \it history window\rm\ for the process, and $\frac MN$ the \it threshold fraction.\rm\

This process and  the more involved multi-stage threshold process defined below are  natural models for the fortunes
of various economic commodities, such as stocks, which respond positively to recent success and negatively to recent failure.
These processes $\{X_n\}_{n=1}^\infty$ are not Markovian, however, the corresponding
$(N+1)$-dimensional processes $\{X_j,X_{j+1},\cdots, X_{j+N}\}_{j=0}^\infty$ are Markovian.
We note that these processes
are examples of random walks \it with internal states\rm; see \cite[page 177]{H}. In particular, if $N=1$, the  process defined in the previous paragraph is known as
a \it persistent random walk\rm; see \cite{H}, \cite{RH}, \cite{HV} and references therein.

We are interested in the deterministic speed of the process, which we denote by $s(N,M;p_0,p_1)$:
$$
s(N,M;p_0,p_1):=\lim_{n\to\infty}\frac{X_n}n\
\text{a.s.}
$$
The following theorem shows that $s(N,M;p_0,p_1)$ exists, and
gives an  explicit expression  for this speed.
\begin{theorem}\label{speed1}
The speed $s(N,M;p_0,p_1)$ exists and is given by
\begin{equation}\label{speed1form}
\begin{aligned}
&s(N,M;p_0,p_1)=\\
&\frac{\frac1{1-p_0}\sum_{j=0}^{M-1}(2\frac jN-1)\binom Nj(\frac{p_0}{1-p_0})^j+\frac1{1-p_1}(\frac{p_0}{1-p_0})^M\sum_{j=M}^N(2\frac jN-1)\binom Nj(\frac{p_1}{(1-p_1)})^{j-M}}
{\frac1{1-p_0}\sum_{j=0}^{M-1}\binom Nj(\frac{p_0}{1-p_0})^j+\frac1{1-p_1}(\frac{p_0}{1-p_0})^M\sum_{j=M}^N\binom Nj
(\frac{p_1}{(1-p_1)})^{j-M}}.
\end{aligned}
\end{equation}
\end{theorem}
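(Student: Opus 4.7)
The plan is to recast the problem via an explicit Markov chain and determine its stationary distribution. Let $\epsilon_j \in \{0,1\}$ be the indicator that the $j$-th jump is to the right, and define $Y_n := (\epsilon_{n-N+1}, \ldots, \epsilon_n) \in \{0,1\}^N$. Then $\{Y_n\}_{n \ge N}$ is a Markov chain on $2^N$ states: from $\omega = (\epsilon_1, \ldots, \epsilon_N)$, one transitions to $(\epsilon_2, \ldots, \epsilon_N, \delta)$ with probability $p(|\omega|)$ if $\delta = 1$ and $1-p(|\omega|)$ if $\delta = 0$, where $p(k) := p_1$ for $k \ge M$ and $p(k) := p_0$ for $k < M$. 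Because $p_0, p_1 \in (0,1)$, every state is reachable from every other in exactly $N$ steps, and the state $(1,\ldots,1)$ admits a self-loop, so the chain is irreducible and aperiodic, hence admits a unique stationary distribution $\pi$. Since $X_n = X_0 + \sum_{j=1}^n (2\epsilon_j - 1)$ and $\epsilon_j$ is the last coordinate of $Y_j$, the ergodic theorem for Markov chains yields
\[
\frac{X_n}{n} \longrightarrow 2E_\pi[\epsilon_N] - 1 \quad \text{a.s.},
\]
establishing existence of the speed.

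The key step is to find $\pi$ in closed form. I would guess the ansatz $\pi(\omega) = f(|\omega|)/Z$, i.e.\ that $\pi$ depends only on the number of right jumps in the window. The stationarity equation
\[
\pi(\epsilon_1, \ldots, \epsilon_N) = \sum_{\epsilon_0 \in \{0,1\}} \pi(\epsilon_0, \epsilon_1, \ldots, \epsilon_{N-1})\, q(\epsilon_N \mid \epsilon_0 + \epsilon_1 + \cdots + \epsilon_{N-1}),
\]
with $q(1\mid m) = p(m)$ and $q(0\mid m) = 1 - p(m)$, yields, upon substituting the ansatz, two cases according to $\epsilon_N \in \{0,1\}$. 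A short computation shows both collapse to the single recursion
\[
f(k)\bigl(1 - p(k)\bigr) = f(k-1)\, p(k-1), \qquad 1 \le k \le N,
\]
so the ansatz is consistent. Iterating from $f(0) = 1$ gives
\[
f(k) = \Bigl(\tfrac{p_0}{1-p_0}\Bigr)^k \text{ for } k \le M-1, \qquad f(k) = \tfrac{1-p_0}{1-p_1}\Bigl(\tfrac{p_0}{1-p_0}\Bigr)^M \Bigl(\tfrac{p_1}{1-p_1}\Bigr)^{k-M} \text{ for } k \ge M.
\]

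Finally, since $\pi$ depends only on $|\omega|$, it is invariant under permutations of coordinates, so
\[
E_\pi[\epsilon_N] = \frac{1}{N} E_\pi\Bigl[\sum_{i=1}^N \epsilon_i\Bigr] = \frac{1}{NZ}\sum_{k=0}^N \binom{N}{k} k\, f(k),
\]
whence
\[
s(N,M;p_0,p_1) = \frac{\sum_{k=0}^N \binom{N}{k}(2k/N - 1)\, f(k)}{\sum_{k=0}^N \binom{N}{k}\, f(k)}.
\]
Splitting the sums at $k = M$, inserting the explicit form of $f$, and multiplying numerator and denominator by $1/(1-p_0)$ to absorb the piecewise normalization produces exactly the expression in \eqref{speed1form}. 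The main obstacle is the algebraic verification that both cases $\epsilon_N=0$ and $\epsilon_N=1$ of the stationarity equation reduce to the same recursion in $k$; this is the crucial structural observation justifying the ansatz $\pi(\omega) = f(|\omega|)/Z$, after which the remainder is routine bookkeeping.
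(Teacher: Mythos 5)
Your proposal is correct and follows essentially the same route as the paper: the paper likewise introduces the auxiliary Markov chain of the last $N$ increments, verifies by the same case analysis of the stationarity equations that the invariant measure depends only on the number of recent right jumps (yielding the identical recursion $f(k)(1-p(k))=f(k-1)p(k-1)$), and then applies the ergodic theorem. The only cosmetic difference is that the paper averages $X_{(j+1)N}-X_{jN}=f(Z_{jN})$ over blocks of length $N$ rather than using the last coordinate at every step, which changes nothing of substance.
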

\noindent \bf Remark.\rm\ The form of the speed function $s(N,M;p_0,p)$ is quite complicated, even for small values of $N$.
The one case where the form is somewhat simple is the case in which $N=M$, that is, the case in which the process is excited
only at those times at which it has just experienced at least $N$ consecutive jumps to the right.
We have
$$
\sum_{j=0}^{N-1}\binom Nj(\frac{p_0}{1-p_0})^j=\sum_{j=0}^N\binom Nj(\frac{p_0}{1-p_0})^j-(\frac{p_0}{1-p_0})^N=(\frac1{1-p_0})^N-(\frac{p_0}{1-p_0})^N,
$$
and
$$
\sum_{j=0}^Nj\binom Nj(\frac{p_0}{1-p_0})^j=N\frac{p_0}{1-p_0}\sum_{j=1}^N\binom{N-1}{j-1}(\frac{p_0}{1-p_0})^{j-1}=
\frac{Np_0}{1-p_0}(\frac1{1-p_0})^{N-1}.
$$
Making the calculations and doing a little algebra, one finds that
$$
s(N,N;p_0,p_1)=\frac{p_0^N(p_1-p_0)+(1-p_1)(2p_0-1)}{p_0^N(p_1-p_0)+1-p_1}.
$$
\medskip

We now  let   the history window $N$ increase to infinity and we let the threshold fraction $\frac MN$ converge to some
limiting value $r\in[0,1]$. There exists a critical value of $r$ below which the speed
converges to $2p_1-1$ and above which the speed converges to $2p_0-1$.
\begin{theorem}\label{limitspeed1}
Let
$$
r^*(p_0,p_1):=\frac{\log\frac{1-p_0}{1-p_1}}{\log(\frac{p_1}{p_0}\frac{1-p_0}{1-p_1})},\ 0< p_0< p_1<1.
$$
Then $r^*(p_0,p_1)$ is strictly monotone increasing in each of its variables, and $p_0<r^*(p_0,p_1)<p_1$.

One has
$$
\lim_{N\to\infty,\frac MN\to r}s(N,M;p_0,p_1)=\begin{cases} 2p_1-1,& \ \text{if}\ \  0\le r<r^*(p_0,p_1);\\ 2p_0-1,& \ \text{if}\ \ r^*(p_0,p_1)<r\le 1.
\end{cases}
$$
If $\frac MN\to r^*(p_0,p_1)$ and
$\lim_{N\to\infty}\big(\frac{p_0(1-p_1)}{p_1(1-p_0)}\big)^{M-Nr^*(p_0,p_1)}=\alpha\in[0,\infty]$, then one has
$$
\lim_{N\to\infty}s(N,M;p_0,p_1)=
\frac{(2p_0-1)(1-p_1)+(2p_1-1)(1-p_0)\alpha}{1-p_1+(1-p_0)\alpha}.
$$
(If $\alpha=\infty$, the above limit is of course interpreted as $2p_1-1$.)
\end{theorem}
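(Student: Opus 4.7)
The plan has three ingredients: an elementary analysis of $r^*$, an algebraic recasting of the formula of Theorem 1 in probabilistic form, and a case split based on where $M/N$ sits relative to $r^*$.

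For the first step, I would rewrite the defining relation for $r^*$ as $p_0^{r^*}(1-p_0)^{1-r^*}=p_1^{r^*}(1-p_1)^{1-r^*}$. Since, for fixed $r\in(0,1)$, the map $p\mapsto p^r(1-p)^{1-r}$ is strictly increasing on $[0,r]$ and strictly decreasing on $[r,1]$, the equality with $p_0<p_1$ forces $p_0<r^*<p_1$. Strict monotonicity of $r^*$ in each of $p_0$ and $p_1$ then follows from implicit differentiation of this same relation, the partials having easily signed numerators (e.g.\ the partial in $p_1$ has sign opposite to $r^*-p_1<0$).

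The second step is the key computation. Let $B_i\sim\mathrm{Bin}(N,p_i)$ for $i=0,1$. Using the identity $(1-p_i)^N\binom{N}{j}\bigl(p_i/(1-p_i)\bigr)^j=P(B_i=j)$, together with the equivalent form $(1-p_0)/(1-p_1)=\rho^{-r^*}$ of the defining relation for $r^*$ (with $\rho:=p_0(1-p_1)/(p_1(1-p_0))\in(0,1)$), one can collect the factor $(p_0/p_1)^{M}\bigl((1-p_0)/(1-p_1)\bigr)^{N-M}$ as $\rho^{M-Nr^*}$. Multiplying numerator and denominator of \eqref{speed1form} by $(1-p_0)^{N+1}$ then yields
\begin{equation*}
s(N,M;p_0,p_1)=\frac{E\bigl[(\tfrac{2B_0}{N}-1)\mathbf{1}_{\{B_0\le M-1\}}\bigr]+c\,\rho^{M-Nr^*}\,E\bigl[(\tfrac{2B_1}{N}-1)\mathbf{1}_{\{B_1\ge M\}}\bigr]}{P(B_0\le M-1)+c\,\rho^{M-Nr^*}\,P(B_1\ge M)},
\end{equation*}
with $c:=(1-p_0)/(1-p_1)$. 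This algebraic recasting is the main obstacle; once it is in hand the asymptotics reduce to the law of large numbers.

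For the third step, note that because $p_0<r^*<p_1$, whenever $M/N\to r$ with $r<p_1$ (in particular $r\le r^*$), the WLLN and bounded convergence give $P(B_1\ge M)\to 1$ and $E[(2B_1/N-1)\mathbf{1}_{\{B_1\ge M\}}]\to 2p_1-1$; whenever $r>p_0$ (in particular $r\ge r^*$), one similarly obtains $P(B_0\le M-1)\to 1$ and $E[(2B_0/N-1)\mathbf{1}_{\{B_0\le M-1\}}]\to 2p_0-1$. Now split cases: for $r<r^*$ one has $\rho^{M-Nr^*}\to\infty$, so dividing numerator and denominator by $\rho^{M-Nr^*}$ and using boundedness of the $B_0$-terms forces the limit $2p_1-1$; for $r>r^*$ one has $\rho^{M-Nr^*}\to 0$, the $B_1$-terms drop out, and the limit is $2p_0-1$; at the critical scaling $M/N\to r^*$ with $\rho^{M-Nr^*}\to\alpha\in[0,\infty]$, both LLN convergences apply simultaneously and the limit equals $((2p_0-1)+c\alpha(2p_1-1))/(1+c\alpha)$, which after multiplication of top and bottom by $1-p_1$ becomes exactly the displayed formula.
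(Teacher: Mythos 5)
Your proposal is correct, and it takes a genuinely more direct route than the paper for this particular theorem. The paper obtains Theorem \ref{limitspeed1} as the $l=1$ special case of Theorem \ref{limitspeed2}, whose proof runs through Cram\'er's theorem and the Laplace asymptotic method with the rate function $I(s)=\log 2s^s(1-s)^{1-s}$ (Section \ref{prooflimitspeed2}), supplemented by the exponential-tilting identity \eqref{RN} and the law of large numbers for the critical case (Section \ref{limitcaseprob}); the claims about $r^*$ are proved separately at the end of Section \ref{S:intro} by explicit differentiation and l'H\^opital's rule. You instead tilt both sums in \eqref{speed1form} directly into $\mathrm{Bin}(N,p_0)$ and $\mathrm{Bin}(N,p_1)$ quantities --- essentially the same change-of-measure device as \eqref{RN}--\eqref{mgf}, but applied from the outset --- which makes the competition between the two regimes visible in the single explicit prefactor $c\,\rho^{M-Nr^*}$ with $c=\rho^{-r^*}$. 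This replaces the large-deviations comparison of the $J_i$'s entirely by the weak law of large numbers plus bounded convergence (valid here because $p_0<r^*<p_1$ places both binomial means strictly inside the relevant windows), and it treats the subcritical, supercritical, and critical cases uniformly with no extra work for the $\alpha$-formula. Your unimodality argument for $p_0<r^*<p_1$ (via the characterization $p_0^{r^*}(1-p_0)^{1-r^*}=p_1^{r^*}(1-p_1)^{1-r^*}$ and the fact that $p\mapsto p^r(1-p)^{1-r}$ peaks at $p=r$) is also cleaner than the paper's limit computation, and the implicit differentiation for monotonicity is sound since $\partial_r$ of the defining relation equals $\log\rho<0$ while the partials in $p_0,p_1$ have the signs of $r^*-p_0>0$ and $p_1-r^*>0$ respectively. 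The trade-off is that your explicit two-term bookkeeping does not extend as readily to general $l$, which is what the paper's large-deviations framework buys.
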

\noindent \bf Remark 1.\rm\ Note that $r^*(p_0,p_1)$ can be characterized as the unique value of $r\in(0,1)$ for which
$(\frac{p_1}{p_0})^r(\frac{1-p_1}{1-p_0})^{1-r}=1$.

\noindent \bf Remark 2.\rm\ For fixed $p_0,p_1$, if $r^*(p_0,p_1)$ is irrational, then  every limiting speed between $2p_0-1$ and $2p_1-1$ is possible, by varying the behavior of
$M$, while if $r^*(p_0,p_1)$ is rational, then a dense countable set of speeds between
 $2p_0-1$ and $2p_1-1$ are possible, by varying the behavior of $M$;  however
the only  speeds that are stable with respect to  small perturbations of $M$ (or $r$) are $2p_0-1$ and  $2p_1-1$.

\noindent \bf Remark 3.\rm\
If $\frac MN$ converges to $r^*(p_0,p_1)$ sufficiently rapidly, then $\alpha=1$ and the limiting speed is
$\frac{1-p_1}{(1-p_1)+(1-p_0)}(2p_0-1)+\frac{1-p_0}{(1-p_1)+(1-p_0)}(2p_1-1)$, which is larger than the average between the speeds $2p_1-1$ and $2p_0-1$, obtained respectively
when $r<r^*(p_0,p_1)$ and when $r>r^*(p_0,p_1)$.


\medskip

We now consider a multi-stage threshold version of the process. Let $l\ge1$ be an integer, denoting the number of threshold
stages. (The case $l=1$ is the case treated above; we include it here so that Theorem \ref{speed2} below will include
 Theorem \ref{speed1} as a particular case and  Theorem \ref{limitspeed2}
 below will include   Theorem
\ref{limitspeed1} (except for the claim regarding $r^*(p_0,p_1)$) as a particular case.)
Let $N\ge l$ denote the history window. Let $\{M_j\}_{j=1}^l$ satisfy
$1\le M_1<\cdots<M_l\le N$ and  let $\{p_j\}_{j=0}^l$
satisfy $0<p_0<p_1<\cdots<p_l<1$. For notational  convenience, define $M_0=0$ and $M_{l+1}=N+1$.
We define the process $\{X_n\}_{n=0}^\infty$ on $\mathbb{Z}$ as follows. At each step, the process jumps
either one step to the right or one step to the left, according to the following mechanism. For the first $N$ steps,
the process behaves like a  random walk that jumps to the right with probability $p_0$ and to the left with probability $1-p_0$. At subsequent steps the jump
mechanism is defined as follows: for  $i=0,1,\cdots, l$,  if  between  $M_i$ and $M_{i+1}-1$ out of the $N$ most recent   jumps were
to the right, then the probability of jumping to the right is $p_i$.
We denote the speed of the process by $s(N,M_1,\cdots, M_l;p_0,\cdots, p_l)$:
$$
s(N,M_1,\cdots, M_l;p_0,\cdots, p_l):=\lim_{n\to\infty}\frac{X_n}n\
\text{a.s.}
$$
Again, we  show that the speed exists and give an explicit expression for it. Throughout the paper, we use the standard convention that a void product of the form $\prod_{k=1}^0$
 is equal to 1.
\begin{theorem}\label{speed2}
The speed $s(N,M_1,\cdots, M_l;p_0,\cdots,p_l)$ exists, and is given by
\begin{equation}\label{speed2form}
\begin{aligned}
&s(N,M_1,\cdots, M_l;p_0,\cdots, p_l)=\\
&\frac{\sum_{i=0}^l\frac1{1-p_i}\big(\prod_{k=1}^i(\frac{p_{k-1}}{1-p_{k-1}})^{M_k-M_{k-1}}\big)
\sum_{j=M_i}^{M_{i+1}-1}(2\frac jN-1)\binom Nj(\frac{p_i}{1-p_i})^{j-M_i}}{\sum_{i=0}^l\frac1{1-p_i}\big(\prod_{k=1}^i(\frac{p_{k-1}}{1-p_{k-1}})^{M_k-M_{k-1}}\big)\sum_{j=M_i}^{M_{i+1}-1}
\binom Nj(\frac{p_i}{1-p_i})^{j-M_i}}.
\end{aligned}
\end{equation}
\end{theorem}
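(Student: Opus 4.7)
The plan is to work with the Markov chain on $\{0,1\}^N$ whose state at time $n \geq N$ is $\omega^{(n)} = (\omega_1^{(n)}, \ldots, \omega_N^{(n)})$, with $\omega_k^{(n)} = 1$ if the jump $X_{n-N+k} - X_{n-N+k-1}$ equals $+1$ and $\omega_k^{(n)} = 0$ otherwise. Writing $|\omega| = \sum_k \omega_k$ and $p(s) = p_i$ for $M_i \leq s < M_{i+1}$, the chain shifts and appends: given $\omega^{(n)}$ one sets $\omega^{(n+1)} = (\omega_2^{(n)}, \ldots, \omega_N^{(n)}, \xi)$ with $\xi = 1$ with probability $p(|\omega^{(n)}|)$ and $\xi = 0$ otherwise. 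Because every transition probability lies in $(0,1)$ and the state space is finite, the chain is irreducible, and it is aperiodic since the state $(0,\ldots,0)$ has a self-loop. A unique stationary distribution $\pi$ therefore exists, and the ergodic theorem applied to the function $2\omega_N - 1$ yields that $X_n/n$ converges almost surely to $2 P_\pi(\omega_N = 1) - 1$.

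The heart of the argument is the ansatz $\pi(\omega) = g(|\omega|)$ for some $g : \{0,1,\ldots,N\} \to (0,\infty)$. The two preimages of a state $\omega' = (\omega_1', \ldots, \omega_N')$ are $(a, \omega_1', \ldots, \omega_{N-1}')$ for $a \in \{0,1\}$, and the transition probability from such a predecessor depends only on $a + s'$, where $s' = \sum_{k=1}^{N-1} \omega_k'$. Writing the stationarity equation under the ansatz, the case $\omega_N' = 0$ and the case $\omega_N' = 1$ both collapse to the single recursion
\[
g(s+1)\bigl(1 - p(s+1)\bigr) = g(s)\, p(s), \qquad 0 \leq s \leq N-1.
\]
By uniqueness of the stationary distribution, this shows $\pi$ is indeed of the asserted form. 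Iterating the recursion with the normalization $g(0) = 1/(1-p_0)$ and organizing the product over the intervals $[M_{k-1}, M_k)$ yields, for $s \in [M_i, M_{i+1})$,
\[
g(s) = \frac{1}{1-p_i}\prod_{k=1}^i \Bigl(\frac{p_{k-1}}{1-p_{k-1}}\Bigr)^{M_k - M_{k-1}}\Bigl(\frac{p_i}{1-p_i}\Bigr)^{s-M_i}.
\]

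To conclude, I exploit the permutation symmetry of the ansatz: since $\pi(\omega) = g(|\omega|)$ is invariant under permuting the coordinates of $\omega$, the marginals $E_\pi[\omega_k]$ are equal in $k$, so $P_\pi(\omega_N = 1) = \frac{1}{N}\sum_k E_\pi[\omega_k] = \frac{1}{N}E_\pi[|\omega|]$. Using $P_\pi(|\omega| = s) = Z^{-1}\binom{N}{s} g(s)$ with $Z = \sum_s \binom{N}{s} g(s)$, the speed becomes
\[
\frac{\sum_{s=0}^{N}(2s/N - 1)\binom{N}{s} g(s)}{\sum_{s=0}^{N}\binom{N}{s}g(s)}.
\]
Splitting each sum over the blocks $s \in [M_i, M_{i+1})$ for $i = 0, 1, \ldots, l$ and pulling the $i$-dependent factors of $g(s)$ outside the inner sum reproduces \eqref{speed2form} exactly.

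The step I expect to be the main obstacle is identifying the ansatz $\pi(\omega) = g(|\omega|)$, since a priori the stationary measure of a shift-and-append chain on $\{0,1\}^N$ need not depend only on the Hamming weight; the justification rests on the specific feature that the jump probabilities depend on the state only through $|\omega|$, which allows the two cases of the flow equation to reduce to a single recursion. Once the ansatz is verified, the remainder of the proof reduces to straightforward algebra: solving the one-dimensional recursion and regrouping the resulting sums according to the $l+1$ blocks delineated by the thresholds $M_1 < \cdots < M_l$.
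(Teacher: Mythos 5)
Your proposal is correct and follows essentially the same route as the paper: the same auxiliary shift-and-append chain on the hypercube, the same ansatz that the stationary measure depends only on the Hamming weight (verified through the balance equations, which collapse to the one-step recursion $g(s+1)(1-p(s+1))=g(s)p(s)$), and the same regrouping of the resulting sums over the threshold blocks. The only cosmetic difference is in the last step, where you apply the ergodic theorem to the last coordinate and invoke exchangeability of $\pi$, while the paper applies it along the subsequence of times $jN$ to the telescoping increment $X_{(j+1)N}-X_{jN}$; the two are equivalent.
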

\medskip

As in the case of the  single stage threshold, we now  let   the history window $N$ increase to infinity,
 and we let the threshold fractions converge to limiting values: $\lim_{N\to\infty}\frac {M_i}N=r_i$, $i=1,\cdots, l$.
(The $r_i$'s are not required to be distinct.) For notational convenience, define $r_0=0$ and $r_{l+1}=N$. Recall  the notation
$\text{argmax}_{i\in A}f(i)=\{j\in A: f(j)=\max_{i\in A}f(i)\}$.
\begin{theorem}\label{limitspeed2}
\noindent i.
If
$$
\text{argmax}_{i\in\{0,\cdots, l\}:r_i< p_i< r_{i+1}}\frac1{p_i^{r_i}(1-p_i)^{1-r_i}}\prod_{k=1}^i(\frac{p_{k-1}}{1-p_{k-1}})^{r_k-r_{k-1}}=\{i_0\},
$$
then
\begin{equation}\label{limitspeed2form}
\lim_{N\to\infty,\frac{M_i}N\to r_i,\ i=1,\cdots, l}s(N,M_1,\cdots, M_l;p_0,\cdots, p_l)=2p_{i_0}-1.
\end{equation}
\noindent ii.
If
\begin{equation}\label{argmax}
\text{argmax}_{i\in\{0,\cdots, l\}:r_i< p_i< r_{i+1}}\frac1{p_i^{r_i}(1-p_i)^{1-r_i}}\prod_{k=1}^i(\frac{p_{k-1}}{1-p_{k-1}})^{r_k-r_{k-1}}=
\{i_1,\cdots,  i_d\},
\end{equation}
$d\ge2$, and
\begin{equation}\label{argmaxcondition}
\alpha_{i_j}:=\lim_{N\to\infty}
(\frac{p_{i_j}}{1-p_{i_j}})^{-M_{i_j}+Nr_{i_j}}\prod_{k=1}^{i_j}(\frac{p_{k-1}}{1-p_{k-1}})^{(M_k-Nr_k)-(M_{k-1}-Nr_{k-1})}
\in[0,\infty],
\end{equation}
for   $j=1,\cdots, d$, with at most one $j$ for which $\alpha_{i_j}=\infty$,
then
\begin{equation}\label{multi}
\lim_{N\to\infty,\frac{M_i}N\to r_i,\ i=1,\cdots, l}s(N,M_1,\cdots, M_l;p_0,\cdots, p_l)=
\frac{\sum_{j=1}^d\frac{\alpha_{i_j}}{1-p_{i_j}}
(2p_{i_j}-1)}
{\sum_{j=1}^d\frac{\alpha_{i_j}}{1-p_{i_j}}}.
\end{equation}
\end{theorem}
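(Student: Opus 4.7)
The strategy is to perform a Laplace-type asymptotic analysis of the explicit speed formula of Theorem \ref{speed2}. Write the $i$-th summand of the numerator and denominator of (\ref{speed2form}) as $C_i$ and $B_i$ respectively, so that $s=\sum_{i=0}^l C_i/\sum_{i=0}^l B_i$. The plan is to identify, for each $i$, the exponential rate of $B_i$, to show that only the indices $i$ maximizing this rate contribute in the limit, and to observe that the ratio $C_i/B_i$ at such an index tends to $2p_i-1$.

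First, I rewrite the inner sums probabilistically. Letting $Y_i$ denote a binomial random variable with parameters $N$ and $p_i$,
$$
\sum_{j=M_i}^{M_{i+1}-1}\binom{N}{j}\left(\tfrac{p_i}{1-p_i}\right)^{j-M_i}=(1-p_i)^{-N}\left(\tfrac{p_i}{1-p_i}\right)^{-M_i}P(M_i\le Y_i<M_{i+1}),
$$
and the analogous identity (with $P$ replaced by $E[(2Y_i/N-1);\cdot]$) holds for the numerator sums. Taking $N^{-1}\log$ and using $M_k/N\to r_k$ yields
$$
\tfrac{1}{N}\log B_i\to \log\frac{1}{p_i^{r_i}(1-p_i)^{1-r_i}}+\sum_{k=1}^i(r_k-r_{k-1})\log\frac{p_{k-1}}{1-p_{k-1}}
$$
whenever $r_i<p_i<r_{i+1}$, since in that case $P(M_i\le Y_i<M_{i+1})\to 1$ by the law of large numbers and contributes nothing to the rate. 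For an index $i$ with $p_i\notin(r_i,r_{i+1})$, a Cram\'er-type large-deviation estimate on the probability produces a strictly smaller rate; a short calculation using the fact that $q\mapsto r\log q+(1-r)\log(1-q)$ is maximized uniquely at $q=r$ shows that this non-interior rate is strictly dominated by the interior rate of some adjacent index. Since at least one $i$ satisfies $r_i<p_i<r_{i+1}$ (take the smallest $i$ with $p_i<r_{i+1}$), the argmax in (\ref{argmax}) is non-empty and captures all indices relevant to the limit.

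For part (i), the singleton argmax index $i_0$ exponentially dominates all other $B_i$, and similarly $C_{i_0}$ dominates the other $C_i$. The convergence $P(M_{i_0}\le Y_{i_0}<M_{i_0+1})\to 1$ together with dominated convergence gives $E[(2Y_{i_0}/N-1);M_{i_0}\le Y_{i_0}<M_{i_0+1}]\to 2p_{i_0}-1$; hence $C_{i_0}/B_{i_0}\to 2p_{i_0}-1$, which after discarding subdominant contributions is exactly (\ref{limitspeed2form}).

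For part (ii), the indices $i_1,\dots,i_d$ tie at leading exponential order. I factor out the common leading exponential from each $B_{i_j}$; the residual sub-exponential factor is precisely
$$
\frac{P(M_{i_j}\le Y_{i_j}<M_{i_j+1})}{1-p_{i_j}}\left(\tfrac{p_{i_j}}{1-p_{i_j}}\right)^{-M_{i_j}+Nr_{i_j}}\prod_{k=1}^{i_j}\left(\tfrac{p_{k-1}}{1-p_{k-1}}\right)^{(M_k-Nr_k)-(M_{k-1}-Nr_{k-1})},
$$
which by hypothesis (\ref{argmaxcondition}) and the convergence of the probability to $1$ tends to $\alpha_{i_j}/(1-p_{i_j})$. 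The corresponding residual factor in $C_{i_j}$ equals this quantity multiplied by a term tending to $2p_{i_j}-1$. Summing over $j$ in numerator and denominator, cancelling the common exponential, and passing to the limit produces (\ref{multi}); when a (necessarily unique) $\alpha_{i_j}=\infty$, one first divides numerator and denominator by that $\alpha_{i_j}$ before taking limits, and all other contributions vanish. The principal technical obstacle is the large-deviation dominance argument for non-interior indices in the second step, since without it the restriction of the argmax to $\{i:r_i<p_i<r_{i+1}\}$ would be unjustified; the rest of the argument reduces to standard concentration together with careful book-keeping of the sub-exponential corrections.
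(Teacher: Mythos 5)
Your proposal is correct and follows essentially the same route as the paper: exponentially tilt each inner sum to a Binomial$(N,p_i)$ probability, read off the per-index exponential rate, show the argmax can only be attained at indices with $r_i<p_i<r_{i+1}$, and then (for part (ii)) track the subexponential residuals $(\frac{p_{i_j}}{1-p_{i_j}})^{-M_{i_j}+Nr_{i_j}}\prod_k(\frac{p_{k-1}}{1-p_{k-1}})^{(M_k-Nr_k)-(M_{k-1}-Nr_{k-1})}$, which converge to $\alpha_{i_j}$ by hypothesis. (The paper routes the same computation through Ber$(\frac12)$ variables, Cram\'er's theorem and a Radon--Nikodym change of measure, but the content is identical, and your observation that the interior index set is nonempty is a nice addition the paper only asserts.) The one step you dispatch too quickly is the claim that a non-interior index is ``strictly dominated by the interior rate of some adjacent index'': the adjacent index that dominates it need not itself be interior (e.g.\ if $r_{i_0+1}\le p_{i_0}$ and also $r_{i_0+2}<p_{i_0+1}$), so one must either iterate the domination or simply conclude that no non-interior index can be a global maximizer; this case analysis, resting on the monotonicity of $x^{r}(1-x)^{1-r}$ on either side of $x=r$, is where the bulk of the paper's Section 4 actually lives.
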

\noindent \bf Remark 1.\rm\
Similar to the situation described in Remark 2 after Theorem \ref{limitspeed1},
for fixed $\{p_i\}_{i=0}^l$, depending on their particular values, either
  every limiting speed between $2p_0-1$ and $2p_l-1$ is possible or a countably dense set of speeds
  between $2p_0-1$ and $2p_l-1$ are possible, by varying the behavior
of $\{M_i\}_{i=1}^l$, however
the only   speeds that are stable with respect to small perturbations of $\{M_i\}_{i=1}^l$ (or $\{r_i\}_{i=1}^l$)
are   $\{2p_i-1\}_{i=0}^l$. Furthermore, for fixed $\{p_i\}_{i=0}^l$ and $\{r_i\}_{i=1}^l$, if the speed is stable then it must be
from among the speeds
$\{2p_i-1\}_{i: r_i< p_i< r_{i+1}}$. We note that necessarily there is at least one $i$ for which $r_i<p_i<r_{i+1}$.

\noindent \bf Remark 2.\rm\ The requirement in part (ii)  that at most one of the $\alpha_{i_j}=\infty$  was made
in order to avoid complications in the statement of the theorem. The method of proof of part (ii) would also show,
for example, that if more than one of the $\alpha_{i_j}$'s are equal to infinity, and there is a particular $j_0$ such that
the order of  $(\frac{p_{i_j}}{1-p_{i_j}})^{-M_{i_j}+Nr_{i_j}}\prod_{k=1}^{i_j}(\frac{p_{k-1}}{1-p_{k-1}})^{(M_k-Nr_k)-(M_{k-1}-Nr_{k-1})}$
 as $N\to\infty$ is larger when $j=j_0$  than it is for any other $j$, then the limiting speed
is $2p_{i_{j_0}}-1$.
One can  refine this further when two or more of the terms above are on the same order.

\noindent \bf Remark 3.\rm\ In part (ii) of the theorem, the set $\{i_1,\cdots, i_d\}$ need not consist of consecutive
integers. For example, consider $l=2$ and denote the expression corresponding to $i$  appearing on the left hand side of \eqref{argmax} by $J_i$.
Then one has $J_0=\frac1{1-p_0}$, $J_1=\frac1{p_1^{r_1}(1-p_1)^{1-r_1}}(\frac{p_0}{1-p_0})^{r_1}=\frac1{1-p_1}(\frac{p_0}{p_1})^{r_1}(\frac{1-p_1}{1-p_0})^{r_1}$,
and similarly, after some algebra, $J_2=(\frac{p_1}{p_2})^{r_2}(\frac{1-p_2}{1-p_1})^{r_2}\frac{1-p_1}{1-p_2}J_1$.
Choose $0<p_0<p_1<1$ and choose $r_1$ to satisfy $p_0<r^*(p_0,p_1)<r_1<p_1$, where $r^*(p_0,p_1)$ is as in Theorem \ref{limitspeed1}.  Then $J_0>J_1$.
Since $\lim_{p_2\to p_1}(\frac{p_1}{p_2})^{r_2}(\frac{1-p_2}{1-p_1})^{r_2}\frac{1-p_1}{1-p_2}=1$,   uniformly over $r_2\in[0,1]$,
and since  $\lim_{p_2\to1}(\frac{p_1}{p_2})^{r_2}(\frac{1-p_2}{1-p_1})^{r_2}\frac{1-p_1}{1-p_2}=\infty$, for all $r_2\in[0,1)$, we can choose $r_2>p_1$ sufficiently close to $p_1$ to guarantee
that there exists $p_2$ such that $p_1<r_2<p_2<r_3=1$ and such that $J_2=J_0$.
Then the left hand side of \eqref{argmax} will be  equal to $\{i_0,i_2\}$.

\noindent \bf Remark 4.\rm\ Fix $p_0$ as the  pre-threshold stage.
It is possible to have a situation in which each individual threshold stage, $(M_i,p_i)$, is such
that if it were the only threshold stage, then the limiting speed as the history window increases to infinity would be $2p_0-1$, however
when all the stages are present the limiting speed is larger than this. For example, let $l=2$ and  consider for simplicity the case $p_0=\frac12$; so $2p_0-1=0$.
The condition $r<r^*(\frac12,p)$ in Theorem \ref{limitspeed1} can be written as
$\frac1{p^r(1-p)^{1-r}}>2$ (see Remark 1 after Theorem \ref{limitspeed1}).
Thus,  if
$\frac1{p_i^{r_i}(1-p_i)^{1-r_i}}<2$, for  $i= 1,2$, then from Theorem \ref{limitspeed1}, if $(M_i,p_i)$ were the only threshold stage (and $p_0=\frac12$),  the limiting speed would be zero.
One can arrange things so that the above conditions hold but such that
 $\max_{i\in\{0,1,2\}:r_i<p_i<r_{i+1}}\frac1{p_i^{r_i}(1-p_i)^{1-r_i}}\prod_{k=1}^i(\frac{p_{k-1}}{1-p_{k-1}})^{r_k-r_{k-1}}>2$, in which case
 the limiting speed is greater than zero, since the term on the left hand side above,
over which the maximum is being taken, is equal to 2 when $i=0$.
Indeed, for $i=2$, the expression above over which the maximum is being taken is
$\frac1{p_2^{r_2}(1-p_2)^{1-r_2}}( \frac{p_1}{1-p_1})^{r_2-r_1}$.
From the strict monotonicity of $r^*(\frac12,p)$ in $p$,
we can select $p_1,p_2$ and $r_1$ such that $\frac12<p_1<p_2$ and  $r^*(\frac12,p_1)<r_1<r^*(\frac12,p_2)$.
Then if we choose $r_2>r^*(\frac12,p_2)$ such that $r_2-r^*(\frac12,p_2)$
is sufficiently small, we will have
$\frac1{p_2^{r_2}(1-p_2)^{1-r_2}}( \frac{p_1}{1-p_1})^{r_2-r_1}>2$ and $r_2<p_2<r_3=1$.

\noindent \bf Remark 5.\rm\ Consider the random walk with $l$ threshold stages $(M_i,p_i)$ with
 $\frac {M_i}N\to r_i$, $1\le i\le l$. Now consider a random walk with  $(l+1)$ threshold stages, with the $l$ stages above and with an
additional stage $(M,p)$ satisfying $M_{i_1}<M<M_{i_1+1}$, $p_{i_1}<p<p_{i_1+1}$ and
$\frac MN\to r$, with $r_{i_1}<r<r_{i_1+1}$, for some $i_1\in\{0,1,\cdots, l\}$.
Theorem \ref{limitspeed2} allows one to make
a comparison between  the limiting speeds of the two processes as the history window increases to infinity.
Assume that the condition in part (i) of the theorem is in effect so that
the  limiting speed for the $l$-stage process is $2p_{i_0}-1$. If  $i_0>i_1$, then the limiting speed of the $(l+1)$-stage
process will also be $2p_{i_0}-1$; that is, the additional stage cannot affect the limiting speed. On the other hand,
if  $i_0\le i_1$, then the limiting speed of the $(l+1)$-stage
process might be greater than that of the $l$-stage process and might be equal to that of the $l$-stage process,  depending
on the particular values of the relevant parameters.

\medskip

We now present a weak convergence result which shows that in the case that the limiting speed as $N\to\infty$ is stable,
  the random walk excited by its recent history, and shifted sufficiently forward in time, converges weakly
  to a simple random walk.
\begin{theorem}\label{weakconvergence}
Let $P_N$ denote probabilities for the
 random walk  excited by its recent history with history window $N$ and multistage thresholds $\{M_i\}_{i=1}^l$.
Assume that
$\frac {M_i}N\to r_i$, $i=1,\cdots, l$, and that
\begin{equation}\label{argmaxassump}
\text{argmax}_{i\in[0,\cdots, l]:r_i< p_i< r_{i+1}}\frac1{p_i^{r_i}(1-p_i)^{1-r_i}}\prod_{k=1}^i(\frac{p_{k-1}}{1-p_{k-1}})^{r_k-r_{k-1}}=\{i_0\},
\end{equation}
in which case the limiting speed as $N\to\infty$ is $2p_{i_0}-1$.
If the deterministic times $\{T_N\}_{N=1}^\infty$ converge to infinity sufficiently fast,
then the process
 $\{X_{N+T_N+n}-X_{N+T_N}\}_{n=1}^\infty$  converges weakly
to the simple random walk with probability $p_{i_0}$ of
jumping to the right and $1-p_{i_0}$ of jumping to the left. That is,
\begin{equation}\label{weakly}
\begin{aligned}
&\lim_{N\to\infty}P_N(X_{N+T_N+1}-X_{N+T_N}=e_1,\cdots,X_{N+T_N+m}-X_{N+T_N+m-1}=e_m)=\\
&p_{i_0}^{|\{i\in[m]:\thinspace e_i=1\}|}(1-p_{i_0})^{|\{i\in[m]:\thinspace e_i=-1\}|},
\ \text{for all}\  m\ge1\ \text{and}\
 (e_1,\cdots, e_m)\in\{-1,1\}^m.
\end{aligned}
\end{equation}
\end{theorem}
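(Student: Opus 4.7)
The plan is to exploit the Markov structure of the window of recent jumps. Let $J_k := X_k - X_{k-1}$ and $W_n := (J_{n-N+1}, \ldots, J_n) \in \{-1, +1\}^N$ for $n \ge N$. Then $\{W_n\}_{n \ge N}$ is an irreducible, aperiodic finite Markov chain: irreducibility holds because $p_i \in (0,1)$ for every $i$, so from any state one can in at most $N$ steps reach any other by feeding in the appropriate jumps, and aperiodicity is evident from the self-loop at $(-1,\ldots,-1)$. Hence a unique stationary distribution $\pi_N$ exists, and the law of $W_{N+n}$ converges to $\pi_N$ in total variation. ``Sufficiently fast'' in the hypothesis on $T_N$ will mean $T_N$ exceeds the mixing time of the $N$-th chain, so that the law of $W_{N+T_N}$ is within $o(1)$ total-variation distance of $\pi_N$ and can be coupled to a window drawn from $\pi_N$ with vanishing error.

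The crux is to show that under $\pi_N$, the window sum $S := |\{k : J_k = +1\}|$ concentrates near $N p_{i_0}$. A direct stationarity computation, essentially the same one used to derive \eqref{speed2form}, yields the marginal
\[
\pi_N(S = j) \propto \binom{N}{j}\,\frac{1}{1-p_i} \prod_{k=1}^{i}\left(\frac{p_{k-1}}{1-p_{k-1}}\right)^{M_k - M_{k-1}} \left(\frac{p_i}{1-p_i}\right)^{j - M_i}, \quad j \in [M_i, M_{i+1}).
\]
Applying Stirling's formula to $\binom{N}{j}$ with $j = xN$ produces an exponential rate $N \psi_i(x)$; maximizing over $x \in [r_i, r_{i+1}]$ gives, when $r_i < p_i < r_{i+1}$, the maximizer $x = p_i$ and the explicit maximum value $\log \bigl[\frac{1}{p_i^{r_i}(1-p_i)^{1-r_i}} \prod_{k=1}^i (p_{k-1}/(1-p_{k-1}))^{r_k - r_{k-1}}\bigr]$. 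The singleton assumption \eqref{argmaxassump} therefore picks out a unique global maximizer at $x = p_{i_0}$, and Laplace's method gives $S/N \to p_{i_0}$ in probability under $\pi_N$. Because $r_{i_0} < p_{i_0} < r_{i_0+1}$ strictly, the distance from $S$ to each of the boundary thresholds $M_{i_0}$ and $M_{i_0+1}$ is $\Theta(N)$ with probability $1 - o(1)$.

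The finite-dimensional convergence \eqref{weakly} then follows with little further work. On the event that $S \in [M_{i_0}, M_{i_0+1})$, the conditional law of $J_{N+T_N+1}$ is $p_{i_0}$ on $+1$ and $1-p_{i_0}$ on $-1$. After each step the window sum changes by at most one, so for any fixed $m$ the sum stays deep inside $[M_{i_0}, M_{i_0+1})$ for all of the first $m$ additional steps with probability $1 - o(1)$. On this event $J_{N+T_N+1}, \ldots, J_{N+T_N+m}$ are independent with the correct common law. Passing to the limit $N \to \infty$ gives \eqref{weakly} for every $m$ and every sign pattern, hence weak convergence to the desired simple random walk.

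The main obstacle is verifying the closed form for the marginal of $S$ under $\pi_N$ and carrying out the Laplace analysis. Both tasks reduce to the same computations that underlie the proofs of Theorem \ref{speed2} and Theorem \ref{limitspeed2}: the marginal is obtained by grouping states of equal sum in the balance equation, and the resulting Laplace exponent coincides with the expression appearing in the argmax condition \eqref{argmaxassump}. Thus no ingredient beyond what is already in those proofs is required, modulo the standard finite-chain convergence and coupling used to control $T_N$.
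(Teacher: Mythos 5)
Your proposal is correct and follows essentially the same route as the paper: both use the auxiliary Markov chain of the $N$ most recent jumps, its explicit invariant measure, the Laplace-method concentration of the window sum at $Np_{i_0}$ under the singleton argmax hypothesis, and the observation that for fixed $m$ the window sum cannot drift out of $[M_{i_0},M_{i_0+1})$ in $m$ steps, so the increments are conditionally i.i.d.\ Ber$(p_{i_0})$. The only (immaterial) difference is that you phrase the choice of $T_N$ via mixing times and total-variation coupling where the paper invokes the ergodic theorem for the auxiliary chain.
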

\bf \noindent Remark.\rm\ The necessary rate of convergence to infinity of $\{T_N\}_{N=1}^\infty$ is related
to the mixing time of the auxiliary Markov process defined in section \ref{auxiliary}.

The model treated in the theorems above
 involves a fixed number $l$ of threshold stages, independent of $N$.
We now consider a class of models in which the number of threshold stages is  $N$.
Let $G:[0,1]\to(0,1)$ be continuous and nondecreasing. We consider the random walk
excited by its recent history with history window $N$ and with $N$ threshold levels $\{M_i\}_{i=1}^N$,
defined by $M_i=i$. The
corresponding probabilities $\{p_i\}_{i=1}^N$ are given
by $p_i=G(\frac iN)$, $i=1,\cdots, N$.
It follows from Theorem \ref{speed2} that for fixed $N$, the limiting speed, which we will denote by $s(N;G)$, exists
and is given by
\begin{equation*}
s(N;G)=\frac{\sum_{i=0}^N\frac1{1-G(\frac iN)}\big(\prod_{k=1}^i\frac{G(\frac{k-1}N)}{1-G(\frac{k-1}N)}\big)\binom Ni(2\frac iN-1)}
{\sum_{i=0}^N\frac1{1-G(\frac iN)}\big(\prod_{k=1}^i\frac{G(\frac{k-1}N)}{1-G(\frac{k-1}N)}\big)\binom Ni}.
\end{equation*}
(In  Theorem \ref{speed2}, use
$l=N$, $M_j=j$, $1\le j\le N$, and $p_j=G(\frac jN)$, $0\le j\le N$.)
Note that from the assumptions on $G$, it must have at least one fixed point; that is, a point $p\in(0,1)$ for which $G(p)=p$.
We will prove the following result.
\begin{theorem}\label{Gthm}
Let
$$
F(p):=\int_0^p\log\frac{G(x)}{1-G(x)}dx-p\log p-(1-p)\log(1-p), \ 0<p<1.
$$
Then $F$ attains a maximum, and  any point at which $F$ attains its maximum is a fixed point for $G$.
In particular then, a sufficient condition for $F$ to attain its maximum uniquely is that   $G$ have a unique fixed point.
If $F$ attains its maximum uniquely at $p^*$, then
\begin{equation}\label{speed*}
\lim_{N\to\infty}s(N;G)=2p^*-1.
\end{equation}
\end{theorem}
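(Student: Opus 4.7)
The plan is to view the quotient defining $s(N;G)$ as a discrete Laplace ratio with exponent $NF(i/N)$ and conclude by concentration at the maximizer of $F$. First I would analyze $F$ itself. Since $G$ is continuous on the compact set $[0,1]$ with values in $(0,1)$, both $G$ and $1-G$ are bounded away from $0$, so $f(x):=\log\frac{G(x)}{1-G(x)}$ is continuous and bounded on $[0,1]$. Because $p\log p$ and $(1-p)\log(1-p)$ tend to $0$ at the endpoints, $F$ extends continuously to $[0,1]$ with $F(0)=0$. Differentiating yields
$$F'(p)=\log\frac{G(p)(1-p)}{p(1-G(p))},\qquad p\in(0,1),$$
which vanishes precisely when $G(p)=p$ and satisfies $F'(0^+)=+\infty$, $F'(1^-)=-\infty$. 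Hence $F$ attains its maximum at some point of $(0,1)$, every such maximizer is a fixed point of $G$, and $F^*:=\max F$ strictly exceeds both $F(0)=0$ and $F(1)$. This proves the first two assertions of the theorem; in particular, a unique fixed point forces a unique maximum.

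Next I would derive a uniform logarithmic asymptotic for the summand. Set
$$a_i:=\frac{1}{1-G(i/N)}\binom{N}{i}\prod_{k=1}^i\frac{G((k-1)/N)}{1-G((k-1)/N)},$$
so that $s(N;G)=\sum_i(2i/N-1)a_i\big/\sum_i a_i$. Stirling's formula gives $\log\binom{N}{i}=NH(i/N)+O(\log N)$ uniformly in $i\in\{1,\dots,N-1\}$, where $H(p):=-p\log p-(1-p)\log(1-p)$; uniform continuity of $f$ on $[0,1]$ gives the Riemann-sum bound $\sum_{k=0}^{i-1}f(k/N)=N\int_0^{i/N}f(x)\,dx+N\omega_f(1/N)$, with $\omega_f$ the modulus of continuity of $f$, uniformly in $i$; and $-\log(1-G(i/N))$ is bounded. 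Combining,
$$\tfrac{1}{N}\log a_i=F(i/N)+o(1),\qquad\text{uniformly in }i\in\{0,\dots,N\}.$$

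Finally, I would run the Laplace step. Assume the maximizer $p^*\in(0,1)$ of $F$ is unique. For $\varepsilon>0$, continuity of $F$ on $[0,1]$, compactness of $\{p\in[0,1]:|p-p^*|\ge\varepsilon\}$, and uniqueness of the interior maximizer (using $F^*>F(0),F(1)$) supply $\delta>0$ with $F(p)\le F^*-2\delta$ on that set. The uniform asymptotic then yields $a_i\le e^{N(F^*-\delta)}$ for all such $i$ and $N$ large, whereas $a_{\lfloor Np^*\rfloor}\ge e^{N(F^*-\delta/2)}$. Thus both numerator and denominator of $s(N;G)$ concentrate on $|i/N-p^*|<\varepsilon$, on which $2i/N-1$ differs from $2p^*-1$ by at most $2\varepsilon$; sending $\varepsilon\downarrow 0$ gives $\lim_{N\to\infty}s(N;G)=2p^*-1$. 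The main obstacle will be the simultaneous uniform control of the Stirling and Riemann approximations across the full index range, including the boundary indices; this is resolved by the continuity of $f$ on the closed interval $[0,1]$ (which yields a uniform modulus) together with the strict inequality $F^*>\max(F(0),F(1))$ guaranteed by $F'(0^+)=+\infty$, $F'(1^-)=-\infty$, which makes any endpoint contribution exponentially negligible compared to the bulk.
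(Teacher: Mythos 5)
Your proposal is correct and follows essentially the same route as the paper: identify the exponential rate of the weights $\binom{N}{i}\prod_{k=1}^{i}\frac{G((k-1)/N)}{1-G((k-1)/N)}$ as $e^{NF(i/N)+o(N)}$ via Stirling's formula and a Riemann-sum approximation, verify that $F'(p)=\log\frac{G(p)(1-p)}{p(1-G(p))}$ vanishes exactly at fixed points of $G$ and blows up at the endpoints, and conclude by the Laplace asymptotic method. Your treatment is somewhat more explicit than the paper's about the uniformity of the Stirling and Riemann errors and about why the endpoint indices are negligible, but these are points the paper also addresses (the latter by a separate bound on $\binom{N}{[\gamma_N N]}$ for $\gamma_N\to 0$), so there is no substantive difference in approach.
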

An interesting model, which constitutes a randomization of the basic single threshold model, can be fit into
 the $N$-threshold  model treated in Theorem \ref{Gthm}.
Let $N\ge1$ be a positive integer and let $0<\rho_0<\rho_1<1$.
Define a process $\{X_n\}_{n=0}^\infty$ on $\mathbb{Z}$ as follows. At each step, the process jumps
either one step to the right or one step to the left, according to the following mechanism. For the first $N$ steps,
the process behaves like a  random walk that jumps to the right with probability $\rho_0$ and to the left
with probability $1-\rho_0$. At subsequent steps, the jump mechanism is defined as follows: uniformly at random select one step
from among the most recent $N$ steps. If that step was a jump to the right, then the probability of jumping to the right
is $\rho_1$, while if that step was a jump to the left, then the probability of jumping to the right is $\rho_0$.
This model is in fact equivalent to the $N$-threshold model with $G$ being the linear function $G(p)=p\rho_1+(1-p)\rho_0$.
Indeed, note that for any $0\le j\le N$, if in fact $j$ of the $N$ most recent jumps were to the right, then
when selecting one step from among the most recent $N$ steps uniformly at random, the probability that that step
will be a step to the right is $\frac jN$.
Thus, if $j$ of the $N$ most recent jumps were to the right, the probability of jumping to the right will be
$\frac jN\rho_1+(1-\frac jN)\rho_0$.
The  above function $G$ has a unique fixed point  $p^*=\frac{\rho_0}{1-\rho_1+\rho_0}$.
Thus, for this model, the limiting speed as $N\to\infty$ is
$2\frac{\rho_0}{1-\rho_1+\rho_0}-1$.

In the above case, when $G$ is linear, the result in Theorem \ref{Gthm} can be understood heuristically as follows.
Let
$G(p)=p\rho_1+(1-p)\rho_0$. Fix any $p\in(0,1)$ and consider a simple random walk with probability
$p$ of jumping to the right. If after many (at least $N$) steps, we were to impose on the process for one step
the $N$-threshold model with this $G$, then
the probability of that step being to the right would be $G(p)=p\rho_1+(1-p)\rho_0$. If we were then to   let
the process continue as a simple random walk with probability $G(p)$ of jumping right, and then again after many steps
we were to impose the above mechanism  for one step, then the probability of the next step being to the right would be $G(G(p))=G^{(2)}(p)$.
Comparing this process to the actual process, a little thought suggests that
as $N\to\infty$, the limiting speed of the actual process should be $2p^*-1$, where $p^*$ is the fixed point of $G$.

\medskip

As already noted,  Theorem \ref{speed2}  includes
 Theorem \ref{speed1} as a particular case and  Theorem \ref{limitspeed2}
  includes   Theorem
\ref{limitspeed1} (except for the claim regarding $r^*(p_0,p_1)$) as a particular case.
In section \ref{auxiliary} we define and compute the invariant probability measure of an  auxiliary Markov chain that encodes
the $N$ most recent jumps of the original process.  Using this result, the proof of Theorem \ref{speed2} is almost
immediate; it appears in section \ref{proofspeed2}. In section \ref{prooflimitspeed2} we prove part (i)
of Theorem \ref{limitspeed2} and  in section \ref{limitcaseprob} we prove part (ii) of  Theorem \ref{limitspeed2}.
Theorem \ref{weakconvergence} is proved in section \ref{weakconv} and Theorem \ref{Gthm} is proved
in section \ref{lequalsN}.
Finally, we prove here the statement in Theorem \ref{limitspeed1} about the behavior of $r^*(p_0,p_1)$; namely that it is strictly
monotone increasing in each of its variables, when $p_0<p_1$,   and that it satisfies $p_0<r^*(p_0,p_1)<p_1$, when $p_0<p_1$.
We have
$$
\begin{aligned}
&\frac{\partial r^*}{\partial p_0}(p_0,p_1)=\frac1{1-p_0}\frac1{(\log\frac{p_1}{p_0}\frac{1-p_0}{1-p_1})^2}\big(-\log\frac{p_1}{p_0}+
\frac{1-p_0}{p_0}\log\frac{1-p_0}{1-p_1}\big):=\\
&\frac1{1-p_0}\frac1{(\log\frac{p_1}{p_0}\frac{1-p_0}{1-p_1})^2}R(p_0,p_1).
\end{aligned}
$$
It is easy to see that
$\lim_{p_0\to0}R(p_0,p_1)=\infty$,
$\lim_{p_0\to p_1^-}R(p_0,p_1)=0$ and  $\frac{\partial R}{\partial p_0}(p_0,p_1)<0$.
Thus $r^*(p_0,p_1)$ is strictly monotone increasing in $p_0$. A similar calculation shows that
 $r^*(p_0,p_1)$ is strictly monotone increasing in $p_1$.
Using l'H\^opital's rule, it follows easily that
$$
\lim_{p_0\to p_1^-}r^*(p_0,p_1)=\lim_{p_1\to p_0^+}r^*(p_0,p_1)=p_0=p_1.
$$
This in conjunction with the monotonicity shows that
$p_0<r^*(p_0,p_1)<p_1$, when $p_0<p_1$.

\section{An auxiliary Markov chain and its invariant measure}\label{auxiliary}
Let $\{X_n\}_{n=0}^\infty$ denote the random walk excited by its recent history, with $l$ threshold stages, $l\ge1$.
Define
the $N$-dimensional process $\{Z_n\}_{n=0}^\infty$ by
$$
Z_n=(X_{n+1}-X_n, X_{n+2}-X_{n+1},\cdots, X_{n+N}-X_{n+N-1}).
$$
The process $\{Z_n\}_{n=0}^\infty$ encodes the most recent $N$ jumps of the original process $\{X_n\}_{n=0}^\infty$.
It is clear from the definition of the original process that  $\{Z_n\}_{n=0}^\infty$ is a Markov process; its state space is $H_N:=\{-1,1\}^N$. Denote probabilities for this Markov process
starting from $v\in H_N$ by $P_v$, and denote the corresponding expectation by $E_v$.
Clearly, $\{Z_n\}_{n=0}^\infty$ is irreducible.
For $v=(v_1,\cdots, v_N)\in H_N$, let $\#^+(v)=\sum_{i=1}^N 1_{\{v_i=1\}}$ denote the number of 1's from among its $N$ entries.
It turns out that the invariant probability measure $\mu_Z$ on $H_N$ for the process
$\{Z_n\}_{n=0}^\infty$ is constant on the level sets of $\#^+$.
It is this fact
that allows for the explicit calculation of the speed in Theorem \ref{speed2}.
Recall that we have defined for notational convenience $M_0=0$ and $M_{l+1}=N+1$.
\begin{proposition}\label{invariant}
For $i=0,1,\cdots, l$, one has
\begin{equation}\label{explicitinvar}
\begin{aligned}
&\mu_Z(v)=C\frac{1}{(1-p_i)}(\frac{p_i}{1-p_i})^{\#^+(v)-M_i}\prod_{k=1}^i(\frac{p_{k-1}}{1-p_{k-1}})^{M_k-M_{k-1}},\\
&\text{if}\ M_i\le \#^+(v)<M_{i+1},
\end{aligned}
\end{equation}
where $C$ is the appropriate normalizing constant.
\end{proposition}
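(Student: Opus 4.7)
The process $\{Z_n\}_{n=0}^\infty$ lives on the finite state space $H_N = \{-1,1\}^N$, and its transition sends $(v_1,\ldots,v_N)$ to $(v_2,\ldots,v_N,\varepsilon)$, where $\varepsilon = 1$ with probability $p_{i(v)}$ and $\varepsilon = -1$ with probability $1 - p_{i(v)}$, with $i(v)$ the unique index satisfying $M_{i(v)} \le \#^+(v) < M_{i(v)+1}$. This chain is irreducible on $H_N$ (every state is reachable from any other in at most $N$ steps), so its invariant probability measure is unique. It therefore suffices to verify that the candidate $\mu_Z$ of \eqref{explicitinvar} satisfies the stationarity equation $\mu_Z(w) = \sum_v \mu_Z(v)\, P(v,w)$ at every $w \in H_N$.

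The plan is to exploit the fact that both the transition probability out of $v$ and the proposed $\mu_Z(v)$ depend on $v$ only through $\#^+(v)$, in order to collapse the $2^N$ stationarity equations on $H_N$ into a single one-variable recursion. Each target $w = (w_1,\ldots,w_N)$ has exactly two predecessors $v^{\pm} := (\pm 1, w_1,\ldots,w_{N-1})$, and a short count gives $\#^+(v^+) = \#^+(w) + \mathbf{1}_{\{w_N = -1\}}$ and $\#^+(v^-) = \#^+(w) - \mathbf{1}_{\{w_N = 1\}}$. Writing $\pi(j)$ for the common value of $\mu_Z$ on $\{\#^+ = j\}$ and $i(j)$ for the stage index of $j$, and splitting the stationarity equation on the sign of $w_N$, one finds that the case $w_N = 1$ (valid for $j = \#^+(w) \ge 1$) yields $\pi(j)(1 - p_{i(j)}) = \pi(j-1)\, p_{i(j-1)}$, while the case $w_N = -1$ (valid for $j \le N-1$) produces exactly the same identity after the shift $j \mapsto j+1$. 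Thus stationarity across all of $H_N$ is equivalent to the single recursion
$$\pi(j)(1 - p_{i(j)}) = \pi(j-1)\, p_{i(j-1)}, \qquad 1 \le j \le N.$$

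It then remains to verify this recursion directly from the explicit formula \eqref{explicitinvar}. In the interior of a stage, where $M_i < j < M_{i+1}$ so that $i(j-1) = i(j) = i$, the ratio $\pi(j)/\pi(j-1)$ is immediately $p_i/(1-p_i)$, matching the recursion. At a boundary $j = M_{i+1}$, where $i(j-1) = i$ and $i(j) = i+1$, the prefactor $1/(1-p_i)$ is replaced by $1/(1-p_{i+1})$, and simultaneously the product in $\pi(j)$ picks up one additional factor $(p_i/(1-p_i))^{M_{i+1}-M_i}$ that almost cancels the factor $(p_i/(1-p_i))^{M_{i+1}-1-M_i}$ carried by $\pi(j-1)$, leaving a net ratio $p_i/(1-p_{i+1}) = p_{i(j-1)}/(1-p_{i(j)})$, as required. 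The whole argument is pure bookkeeping; the only point where care is needed is the boundary $j = M_{i+1}$, where the stage index changes between $j-1$ and $j$ and one must track the prefactor and the accumulated product in \eqref{explicitinvar} simultaneously. The proposition then follows upon normalization by $C$.
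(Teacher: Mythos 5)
Your proposal is correct and follows essentially the same route as the paper: both exploit that the candidate measure and the transition probabilities depend on $v$ only through $\#^+(v)$, identify the two predecessors $(\pm1,w_1,\cdots,w_{N-1})$ of each state, and reduce the $2^N$ stationarity equations to the one-variable birth--death recursion $\pi(j)(1-p_{i(j)})=\pi(j-1)p_{i(j-1)}$. The only (immaterial) difference is direction of argument: the paper solves the recursion forward from $\alpha_0$ stage by stage to derive the formula, while you verify that the stated formula satisfies the recursion and invoke uniqueness of the invariant measure, which in fact cleanly completes the induction the paper summarizes as ``continuing in this vein.''
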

\begin{proof}
Let $A=\{A_{w,v}\}_{w,v\in H_N}$ denote the transition probability matrix for the Markov chain $\{Z_n\}_{n=0}^\infty$; that is,
$A_{w,v}=P(Z_{n+1}=v|Z_n=w)$.
The invariant probability measure $\mu_Z=\mu_Z(v)$ is the unique function satisfying $\mu_ZA=\mu_Z$ and $\sum_{v\in H_N}\mu_Z(v)=1$.
The equality $\mu_ZA=\mu_Z$ is a set of equations indexed by $v\in H_N$ via the term $\mu_Z(v)$ on the right hand side of the equality.
We will attempt to find a solution to the above set of equations with $\mu_Z$ constant on the level sets of $\#^+$.

Let $\alpha_k=\mu_Z(v)$, for $\#^+(v)=k$, $k=0,1,\cdots, N$. We substitute this into the above equations.
Consider first the equation corresponding to a $v\in H_N$  satisfying
$\#^+(v)=k$ with $1\le k\le M_1-1$ and $v_N=1$. There are exactly two states
$w$ that lead to $v$; namely $w_1=(1,v_1,\cdots, v_{N-1})$ and
$w_2=(-1,v_1,\cdots, v_{N-1})$. We have $\#^+(w_1)=k$ and $\#^+(w_2)=k-1$.
Because we have assumed that $k\le M_1-1$, we have $\#^+(w_i)\le M_1-1$, $i=1,2$.
Consequently,  conditioned on $Z_n=w_i$, it follows that at time $n+N$, no more than $M_1-1$ of the most recent $N$ jumps  of the process $\{X_k\}_{k=0}^\infty$ were to the right.
This means that  conditioned on $Z_n=w_i$, the probability that $X_{N+n+1}=X_{N+n}+1$ is equal to $p_0$;  equivalently, $A_{w_i,v}=p_0$.
Thus, from the equation indexed by $v$ in the equality   $\mu_ZA=\mu_Z$,  and from  the definition of $\{\alpha_i\}$, we obtain
\begin{equation}\label{-1}
p_0\alpha_k+p_0\alpha_{k-1}=\alpha_k,\ 1\le k\le M_1-1.
\end{equation}
From this we conclude that
\begin{equation}\label{0}
\alpha_k=(\frac{p_0}{1-p_0})^k\alpha_0,\ k=0,\cdots, M_1-1.
\end{equation}

Now consider the equation corresponding to a $v\in H_N$ satisfying
$\#^+(v)=k\le M_1-2$ and $v_N=-1$. There are exactly two states
$w$ that lead to $v$; namely $w_1=(1,v_1,\cdots, v_{N-1})$ and
$w_2=(-1,v_1,\cdots, v_{N-1})$. We have $\#^+(w_1)=k+1$ and $\#^+(w_2)=k$.
Because we have assumed that $k\le M_1-2$, we have $\#^+(w_i)\le M_1-1$, $i=1,2$.
Consequently,  as in the previous case, conditioned on $Z_n=w_i$, it follows that at time $n+N$, no more than $M_1-1$ of the most recent $N$ jumps  of the process $\{X_k\}_{k=0}^\infty$ were to the right.
This means that  conditioned on $Z_n=w_i$, the probability that $X_{N+n+1}=X_{N+n}-1$ is equal to $1-p_0$;  equivalently, $A_{w_i,v}=1-p_0$.
Thus, from the equation indexed by $v$ in the equality   $\mu_ZA=\mu_Z$,  and from  the definition of $\{\alpha_i\}$, we obtain
\begin{equation*}
(1-p_0)\alpha_{k+1}+(1-p_0)\alpha_k=\alpha_k, \ 0\le k\le M_1-2,
\end{equation*}
which is consistent with \eqref{-1}.

Now consider the equation corresponding to a $v\in H_N$ satisfying $\#^+(v)=M_1-1$ and $v_N=-1$.
There are exactly two states
$w$ that lead to $v$; namely $w_1=(1,v_1,\cdots, v_{N-1})$ and
$w_2=(-1,v_1,\cdots, v_{N-1})$. We have $\#^+(w_1)=M_1$ and $\#^+(w_2)=M_1-1$.
Consequently, conditioned on $Z_n=w_1$, it follows that at time $n+N$, exactly  $M_1$ of the most recent $N$ jumps  of the process $\{X_k\}_{k=0}^\infty$ were to the right.
This means that  conditioned on $Z_n=w_1$, the probability that $X_{N+n+1}=X_{N+n}-1$ is equal to $1-p_1$;  equivalently, $A_{w_1,v}=1-p_1$.
However, conditioned on  $Z_n=w_2$, it follows that at time $n+N$, exactly  $M_1-1$ of the most recent $N$
jumps  of the process $\{X_k\}_{k=0}^\infty$ were to the right.
This means that  conditioned on $Z_n=w_2$, the probability that $X_{N+n+1}=X_{N+n}-1$ is equal to $1-p_0$;  equivalently, $A_{w_2,v}=1-p_0$.
Thus, from the equation indexed by $v$ in the equality   $\mu_ZA=\mu_Z$,  and from  the definition of $\{\alpha_i\}$, we obtain
\begin{equation}\label{m1}
(1-p_1)\alpha_{M_1}+(1-p_0)\alpha_{M_1-1}=\alpha_{M_1-1}.
\end{equation}
From this and \eqref{0} we conclude that
\begin{equation}\label{01}
\alpha_{M_1}=\frac{p_0}{1-p_1}\alpha_{M_1-1}=\frac{1-p_0}{1-p_1}(\frac{p_0}{1-p_0})^{M_1}\alpha_0.
\end{equation}

Now consider the equation corresponding to  a $v\in H_N$ satisfying $\#^+(v)=k$ with $M_1+1\le k\le M_2-1$ and $v_N=1$.
As before, the two states that lead to $v$ are
$w_1=(1,v_1,\cdots, v_{N-1})$ and
$w_2=(-1,v_1,\cdots, v_{N-1})$. We have $M_1\le \#^+(w_i)\le M_2-1$, for $i=1,2$.
Consequently, conditioned on $Z_n=w_i$, it follows that at time $n+N$, between   $M_1$ and $M_2-1$ of the most recent $N$ jumps  of the process $\{X_k\}_{k=0}^\infty$ were to the right.
This means that  conditioned on $Z_n=w_i$, the probability that $X_{N+n+1}=X_{N+n}+1$ is equal to $p_1$;  equivalently, $A_{w_i,v}=p_1$.
Thus, from the equation indexed by $v$ in the equality   $\mu_ZA=\mu_Z$,  and from  the definition of $\{\alpha_i\}$, we obtain
\begin{equation}\label{intermediate}
p_1\alpha_k+p_1\alpha_{k-1}=\alpha_k,\ M_1+1\le k\le M_2-1.
\end{equation}
From this we conclude that
$\alpha_{M_1+j}=(\frac{p_1}{1-p_1})^j\alpha_{M_1}$, for $j=1,\cdots, M_2-1-M_1$. In conjunction with
\eqref{01}, this gives
\begin{equation}\label{1}
\alpha_{M_1+j}=\frac{1-p_0}{1-p_1}(\frac{p_0}{1-p_0})^{M_1}(\frac{p_1}{1-p_1})^j\alpha_0, \ j=1,\cdots, M_2-1-M_1.
\end{equation}

Now consider the equation corresponding to  a $v\in H_N$ satisfying $\#^+(v)=k$ with $M_1\le k\le M_2-2$ and $v_N=-1$.
The two states that lead to $v$ are
$w_1=(1,v_1,\cdots, v_{N-1})$ and
$w_2=(-1,v_1,\cdots, v_{N-1})$. We have $M_1\le \#^+(w_i)\le M_2-1$, for $i=1,2$.
So by the same reasoning as  in the previous case, we obtain
\begin{equation*}
(1-p_1)\alpha_k+(1-p_1)\alpha_{k+1}=\alpha_k, \ M_1\le k\le M_2-2,
\end{equation*}
which is consistent with \eqref{intermediate}.

Now consider the  equation corresponding to  a $v\in H_N$ satisfying $\#^+(v)=M_1$ and $v_N=1$.
The two states that lead to $v$ are
$w_1=(1,v_1,\cdots, v_{N-1})$ and
$w_2=(-1,v_1,\cdots, v_{N-1})$. We have $\#^+(w_1)=M_1$ and $\#^+(w_2)=M_1-1$.
Thus, the same type of reasoning as above gives
\begin{equation*}
p_1\alpha_{M_1}+p_0\alpha_{M_1-1}=\alpha_{M_1}.
\end{equation*}
This is consistent with \eqref{m1}.

Now consider the  equation corresponding to  a $v\in H_N$ satisfying $\#^+(v)=M_2-1$ and $v_N=-1$.
The two states that lead to $v$ are
$w_1=(1,v_1,\cdots, v_{N-1})$ and
$w_2=(-1,v_1,\cdots, v_{N-1})$. We have $\#^+(w_1)=M_2$ and $\#^+(w_2)=M_2-1$.
Thus, the same type of reasoning as above gives
\begin{equation*}
(1-p_2)\alpha_{M_2}+(1-p_1)\alpha_{M_2-1}=\alpha_{M_2-1}.
\end{equation*}
This in conjunction with \eqref{1} gives
\begin{equation}\label{12}
\begin{aligned}
&\alpha_{M_2}=\frac{p_1}{1-p_2}\alpha_{M_2-1}=\frac{p_1}{1-p_2}\frac{1-p_0}{1-p_1}(\frac{p_0}{1-p_0})^{M_1}(\frac{p_1}{1-p_1})^{M_2-M_1-1}\alpha_0=\\
&\frac{1-p_0}{1-p_2}(\frac{p_0}{1-p_0})^{M_1}(\frac{p_1}{1-p_1})^{M_2-M_1}\alpha_0.
\end{aligned}
\end{equation}
Note that
\eqref{0},\eqref{01},\eqref{1} and \eqref{12} are consistent with the claim of the proposition.
(The factor $1-p_0$ is absorbed in the constant $C$.)
Continuing in this vein completes the proof of the proposition.
\end{proof}

\section{Proof of Theorem \ref{speed2}}\label{proofspeed2}
For $v=(v_1,\cdots, v_N)\in H_N=\{-1,1\}^N$, let
$f(v)=\sum_{i=1}^Nv_i=2\#^+(v)-N$.
By the ergodic theorem,
\begin{equation}\label{ergodic}
\lim_{n\to\infty}\frac1n\sum_{j=0}^{n-1}f(Z_{jN})=\sum_{v\in H_N}f(v)\mu_Z(v)\ \text{a.s.}
\end{equation}
Since $Z_k=(X_{k+1}-X_k,X_{k+2}-X_{k+1},\cdots, X_{k+N}-X_{k+N-1})$,
we have $f(Z_{jN})=X_{(j+1)N}-X_{jN}$, and thus
$\sum_{j=0}^{n-1}f(Z_{jN})=X_{nN}-X_0$.
Therefore, we conclude from \eqref{ergodic} that
\begin{equation}\label{ergodicagain}
\lim_{n\to\infty}\frac{X_{nN}}{nN}=\frac1N\sum_{v\in H_N}f(v)\mu_Z(v)=\sum_{v\in H_N}(2\frac{\#^+(v)}N-1)\mu_Z(v).
\end{equation}
There are $\binom Nj$ different states $v\in H_N$ satisfying $\#^+(v)=j$.
Using this fact along with  \eqref{ergodicagain} and the formula for $\mu_Z$ in \eqref{explicitinvar},
we conclude that
$\lim_{n\to\infty}\frac{X_{nN}}{nN}$ converges almost surely to the expression on the right hand side of
\eqref{speed2form}. We leave to the reader the standard, easy argument to go from the almost sure convergence
of $\lim_{n\to\infty}\frac{X_{nN}}{nN}$ to the almost sure convergence
of $\lim_{m\to\infty}\frac{X_{m}}{m}$.
  \hfill $\square$

\section{Proof of part (i)  of Theorem \ref{limitspeed2}}\label{prooflimitspeed2}
To prove part (i)  of the theorem, we need to determine which of the $l+1$ terms
$\left\{(\prod_{k=1}^i(\frac{p_{k-1}}{1-p_{k-1}})^{M_k-M_{k-1}})\sum_{j=M_i}^{M_{i+1}-1}
\binom Nj(\frac{p_i}{1-p_i})^{j-M_i}\right\}_{i=0}^l$ in the denominator of \eqref{speed2form}  dominates
as $N\to\infty$. (We have ignored the factor $\frac1{1-p_i}$  which does not depend on $N$.)
Let $\{Y_n\}_{n=1}^\infty$ be a sequence of IID Ber$(\frac12)$ random variables ($P(Y_n=1)=P(Y_n=0)=\frac12$),
 and let $S_n=\sum_{k=1}^n Y_n$.
For $i=0,1,\cdots, l$, we write
\begin{equation}\label{asexpectation}
\begin{aligned}
&\sum_{j=M_i}^{M_{i+1}-1}
\binom Nj(\frac{p_i}{1-p_i})^{j-M_i}=2^NE\left((\frac{p_i}{1-p_i})^{S_N-M_i}; M_i\le S_N\le M_{i+1}-1\right)=\\
&2^NE\left(e^{(S_N-M_i)\log\frac{p_i}{1-p_i}}; M_i\le S_N\le M_{i+1}-1\right).
\end{aligned}
\end{equation}
As is well-known \cite[page 35]{DZ}, the large deviations
rate function for $\{S_n\}_{n=1}^\infty$ is given by
$I(s)=\log2s^s(1-s)^{1-s}$, $s\in[0,1]$.
It follows then \cite[page 19]{DZ} that for $0\le a<b\le 1$,
\begin{equation}\label{cramer}
\lim_{N\to\infty}\frac1N\log P(\frac1NS_N\in(a,b))=\lim_{N\to\infty}\frac1N\log P(\frac1NS_N\in[a,b])=-\min_{s\in[a,b]}I(s).
\end{equation}
Recall that $\frac{M_j}N\to r_j$ when $N\to\infty$. Then using \eqref{cramer}
and applying the standard Laplace asymptotic method to \eqref{asexpectation}, it follows  that
\begin{equation}\label{Laplace}
\lim_{N\to\infty}\frac1N\log 2^{-N}\sum_{j=M_i}^{M_{i+1}-1}
\binom Nj(\frac{p_i}{1-p_i})^{j-M_i}=\sup_{s\in[r_i,r_{i+1}]}\left(-I(s)+(s-r_i)\log\frac{p_i}{1-p_i}\right).
\end{equation}
The function $-I(s)+(s-r_i)\log\frac{p_i}{1-p_i}$ is concave for $s\in[0,1]$ and attains its maximum at $s=p_i$.
Thus, it follows that
$\sup_{s\in[r_i,r_{i+1}]}\left(-I(s)+(s-r_i)\log\frac{p_i}{1-p_i}\right)$
is attained at $s=p_i$, if $r_i\le p_i\le r_{i+1}$; at $s=r_i$, if $r_i>p_i$, and at $s=r_{i+1}$, if $r_{i+1}<p_i$.
Substituting these values of $s$ in $-I(s)+(s-r_i)\log\frac{p_i}{1-p_i}$ and doing a little algebra reveals that
\begin{equation}\label{options}
\begin{aligned}
&\sup_{s\in[r_i,r_{i+1}]}\left(-I(s)+(s-r_i)\log\frac{p_i}{1-p_i}\right)=\\
&\begin{cases}&-\log2p_i^{r_i}(1-p_i)^{1-r_i}, \ \text{if}\ r_i\le p_i\le r_{i+1};\\
&-\log2r_i^{r_i}(1-r_i)^{1-r_i},\ \text{if}\ r_i>p_i;\\
&-\log2r_{i+1}^{r_{i+1}}(1-r_{i+1})^{1-r_{i+1}}(\frac{1-p_i}{p_i})^{r_{i+1}-r_i},\ \text{if}\ r_{i+1}<p_i.\end{cases}
\end{aligned}
\end{equation}
Thus, letting
\begin{equation*}
\hat J_i:= \lim_{N\to\infty}\frac1N\log 2^{-N}\Big(\prod_{k=1}^i(\frac{p_{k-1}}{1-p_{k-1}})^{M_k-M_{k-1}}\Big)\sum_{j=M_i}^{M_{i+1}-1}
\binom Nj(\frac{p_i}{1-p_i})^{j-M_i},
\end{equation*}
and letting
$$
J_i:=e^{\hat J_i},\ i=0,\cdots, l,
$$
we have from \eqref{Laplace}, \eqref{options} and the definition of $I(s)$ that
\begin{equation}\label{J}
J_i=\begin{cases}\frac1{2p_i^{r_i}(1-p_i)^{1-r_i}}\prod_{k=1}^i(\frac{p_{k-1}}{1-p_{k-1}})^{r_k-r_{k-1}},\ \text{if}\
r_i\le p_i\le r_{i+1};\\ \frac1{2r_i^{r_i}(1-r_i)^{1-r_i}}\prod_{k=1}^i(\frac{p_{k-1}}{1-p_{k-1}})^{r_k-r_{k-1}},\
\text{if}\ r_i>p_i;\\
\frac1{2r_{i+1}^{r_{i+1}}(1-r_{i+1})^{1-r_{i+1}}}(\frac{p_i}{1-p_i})^{r_{i+1}-r_i}\prod_{k=1}^i(\frac{p_{k-1}}{1-p_{k-1}})^{r_k-r_{k-1}},\ \text{if}\ r_{i+1}<p_i.
\end{cases}
\end{equation}

If $\max_{0\le i\le l}J_i$ occurs uniquely at $i=i_0$, then it follows from \eqref{speed2form} and the definition of the $\{J_i\}$ that the limiting
speed is given by
\begin{equation*}
\begin{aligned}
&\lim_{N\to\infty}s(N,M_1,\cdots, M_l;p_0,\cdots, p_l)=\\
&\lim_{N\to\infty}
\frac{\sum_{j=M_{i_0}}^{M_{i_0+1}-1}(2\frac jN-1)\binom Nj(\frac{p_{i_0}}{1-p_{i_0}})^{j-M_{i_0}}}
{\sum_{j=M_{i_0}}^{M_{i_0+1}-1}
\binom Nj(\frac{p_{i_0}}{1-p_{i_0}})^{j-M_{i_0}}}.
\end{aligned}
\end{equation*}
From the Laplace asymptotic method noted above, it follows that the right hand side above is equal
to $2s_{i_0}^*-1$, where
$$
s_i^*=\text{argmax}_{s\in[r_i,r_{i+1}]}\left(-I(s)+(s-r_i)\log\frac{p_i}{1-pi}\right).
$$
In particular, if $r_{i_0}\le p_{i_0}\le r_{i_0+1}$, then from the penultimate sentence above  \eqref{options} we conclude that
this limiting  speed is $2p_{i_0}-1$. For an alternative more explicit derivation of this, see the proof of
\eqref{Laplacemethod} in
 section \ref{limitcaseprob}.

In light of the above  paragraph, \eqref{limitspeed2form} will follow under
the assumption of part (i)
if we show that if  $\max_{0\le i\le l}J_i$  occurs at some $i_0$, then
$r_{i_0}< p_{i_0}< r_{i_0+1}$.
We won't assume
 that this maximum occurs uniquely at $i_0$; thus,  the proof will allow one also to restrict the maximum in part (ii)
to those $i$ for which $r_i<p_i<r_{i+1}$. (We note that for the argument used in the previous paragraph, it would be
enough to show that $r_i\le p_i\le r_{i+1}$. However, for the method used in the proof of part (ii) in section \ref{limitcaseprob}, we need
$r_i< p_i< r_{i+1}$.)

We first show that it is not possible to have $r_{i_0+1}\le p_{i_0}$.
Assume that $r_{i_0+1}\le p_{i_0}$. Then
 $i_{0}\le l-1$, since $r_{l+1}=1$.
We will show that $J_{i_0+1}>J_{i_0}$, contradicting the definition of $i_0$.
From \eqref{J} we have
\begin{equation}\label{Ji0}
J_{i_0}=\frac1{2r_{i_0+1}^{r_{i_0+1}}(1-r_{i_0+1})^{1-r_{i_0+1}}}
\prod_{k=1}^{i_0+1}(\frac{p_{k-1}}{1-p_{k-1}})^{r_k-r_{k-1}}.
\end{equation}
From \eqref{J}, the value of $J_{i_0+1}$ depends on whether $r_{i_0+2}<p_{i_0+1}$ or
$r_{i_0+1}< p_{i_0+1}\le r_{i_0+2}$.

Consider first the case that $r_{i_0+2}<p_{i_0+1}$.
Then from \eqref{J},
\begin{equation}\label{Ji01}
J_{i_0+1}=\frac1{2r_{i_0+2}^{r_{i_0+2}}(1-r_{i_0+2})^{1-r_{i_0+2}}}
(\frac{p_{i_0+1}}{1-p_{i_0+1}})^{r_{i_0+2}-r_{i_0+1}}\prod_{k=1}^{i_0+1}(\frac{p_{k-1}}{1-p_{k-1}})^{r_k-r_{k-1}}.
\end{equation}
Also,
\begin{equation*}
(\frac{p_{i_0+1}}{1-p_{i_0+1}})^{r_{i_0+2}-r_{i_0+1}}>(\frac{r_{i_0+2}}{1-r_{i_0+2}})^{r_{i_0+2}-r_{i_0+1}};
\end{equation*}
thus
\begin{equation}\label{aux}
\begin{aligned}
&\frac1{r_{i_0+2}^{r_{i_0+2}}(1-r_{i_0+2})^{1-r_{i_0+2}}}
(\frac{p_{i_0+1}}{1-p_{i_0+1}})^{r_{i_0+2}-r_{i_0+1}}>\frac1{r_{i_0+2}^{r_{i_0+1}}(1-r_{i_0+2})^{1-r_{i_0+1}}}>\\
&\frac1{r_{i_0+1}^{r_{i_0+1}}(1-r_{i_0+1})^{1-r_{i_0+1}}},
\end{aligned}
\end{equation}
where the last inequality follows from the fact that $x^{r_{i_0+1}}(1-x)^{1-r_{i_0+1}}$ is decreasing in $x$
for $r_{i_0+1}\le x\le 1$.
From \eqref{Ji0}-\eqref{aux} it follows that $J_{i_0+1}>J_{i_0}$.

Now consider the case that $r_{i_0+1}< p_{i_0+1}\le r_{i_0+2}$.
From \eqref{J} we have
\begin{equation}\label{Ji01again}
J_{i_0+1}=\frac1{2p_{i_0+1}^{r_{i_0+1}}(1-p_{i_0+1})^{1-r_{i_0+1}}}\prod_{k=1}^{i_0+1}(\frac{p_{k-1}}{1-p_{k-1}})^{r_k-r_{k-1}}.
\end{equation}
From \eqref{Ji0}, \eqref{Ji01again} and the above noted monotonicity of $x^{r_{i_0+1}}(1-x)^{1-r_{i_0+1}}$,
it follows that $J_{i_0+1}>J_{i_0}$
This completes the proof that $p_{i_0}<r_{i_0+1}$.

We now show that it is not possible to have $r_{i_0}\ge p_{i_0}$. We assume that $i_0\ge1$, since this is trivial for $i_0=0$.
Assume that $r_{i_0}\ge p_{i_0}$.
We will show that $J_{i_0-1}>J_{i_0}$, contradicting the definition of $i_0$.
From \eqref{J}, we have
\begin{equation}\label{Ji0again}
J_{i_0}=\frac1{2r_{i_0}^{r_{i_0}}(1-r_{i_0})^{1-r_{i_0}}}\prod_{k=1}^{i_0}(\frac{p_{k-1}}{1-p_{k-1}})^{r_k-r_{k-1}}.
\end{equation}
From \eqref{J}, the value of $J_{i_0-1}$ depends on whether
$r_{i_0-1}\le p_{i_0-1}< r_{i_0}$ or
 $r_{i_0-1}>p_{i_0-1}$.

Consider first the case  that $r_{i_0-1}\le p_{i_0-1}< r_{i_0}$.
Then from \eqref{J}
\begin{equation}\label{Ji0-}
J_{i_0-1}=\frac1{2p_{i_0-1}^{r_{i_0-1}}(1-p_{i_0-1})^{1-r_{i_0-1}}}\prod_{k=1}^{i_0-1}(\frac{p_{k-1}}{1-p_{k-1}})^{r_k-r_{k-1}}.
\end{equation}
We will show that
\begin{equation}\label{compare0}
\frac1{p_{i_0-1}^{r_{i_0-1}}(1-p_{i_0-1})^{1-r_{i_0-1}}}>
\frac1{r_{i_0}^{r_{i_0}}(1-r_{i_0})^{1-r_{i_0}}}(\frac{p_{i_0-1}}{1-p_{i_0-1}})^{r_{i_0}-r_{i_0-1}}.
\end{equation}
From \eqref{Ji0again}-\eqref{compare0} it follows that
$J_{i_0-1}>J_{i_0}$, if   $r_{i_0-1}\le p_{i_0-1}<r_{i_0}$.
Now  \eqref{compare0} is equivalent to
\begin{equation}\label{compare}
\frac1{p_{i_0-1}^{r_{i_0}}(1-p_{i_0-1})^{1-r_{i_0}}}>\frac1{r_{i_0}^{r_{i_0}}(1-r_{i_0})^{1-r_{i_0}}}.
\end{equation}
But \eqref{compare} follows from the fact that the function $x^{r_{i_0}}(1-x)^{1-r_{i_0}}$ is increasing in $x$ for $0\le x\le r_{i_0}$,
 and by the assumption that  $p_{i_0-1}< r_{i_0}$.
This completes the case $r_{i_0-1}\le p_{i_0-1}<r_{i_0}$.

Now consider the case $r_{i_0-1}>p_{i_0-1}$.
By \eqref{J} we have
\begin{equation}\label{Ji0-again}
J_{i_0-1}=\frac1{2r_{i_0-1}^{r_{i_0-1}}(1-r_{i_0-1})^{1-r_{i_0-1}}}
\prod_{k=1}^{i_0-1}(\frac{p_{k-1}}{1-p_{k-1}})^{r_k-r_{k-1}}.
\end{equation}
Then $J_{i_0-1}> J_{i_0}$ will follow from \eqref{Ji0-again} and \eqref{Ji0again} if we show that
\begin{equation}\label{lastcase}
\frac1{r_{i_0-1}^{r_{i_0-1}}(1-r_{i_0-1})^{1-r_{i_0-1}}}>\frac1{r_{i_0}^{r_{i_0}}(1-r_{i_0})^{1-r_{i_0}}}(\frac{p_{i_0-1}}{1-p_{i_0-1}})^{r_{i_0}-r_{i_0-1}}.
\end{equation}
Since $r_{i_0-1}>p_{i_0-1}$, \eqref{lastcase} will follow if we show that
\begin{equation*}
\frac1{r_{i_0-1}^{r_{i_0-1}}(1-r_{i_0-1})^{1-r_{i_0-1}}}>\frac1{r_{i_0}^{r_{i_0}}(1-r_{i_0})^{1-r_{i_0}}}(\frac{r_{i_0-1}}{1-r_{i_0-1}})^{r_{i_0}-r_{i_0-1}},
\end{equation*}
or equivalently, that
\begin{equation}\label{lastcaseverify}
\frac1{r_{i_0-1}^{r_{i_0}}(1-r_{i_0-1})^{1-r_{i_0}}}>\frac1{r_{i_0}^{r_{i_0}}(1-r_{i_0})^{1-r_{i_0}}}.
\end{equation}
Since $r_{i_0-1}<r_{i_0}$,  \eqref{lastcaseverify} holds for the same reason that \eqref{compare} holds.
\hfill $\square$
\medskip

\section{Proof of part (ii)  of Theorem \ref{limitspeed2}}\label{limitcaseprob}
We assume that \eqref{argmax} holds. From
 the proof of part (i) of Theorem \ref{limitspeed2}, this means that
 of the  $l+1$ summands in the denominator
of \eqref{speed2form}, labeled from 0 to $l$, the ones with the labels $\{i_1,\cdots,  i_d\}$ dominate over the others when
$N\to\infty$. Note  that $r_{i_t}<p_{i_t}<r_{i_t+1}$, for $t=1,\cdots, d$. (The proof of part (i) established
that this is a necessary condition for domination.)
It then follows from \eqref{speed2form} that
\begin{equation}\label{Lapmult}
\begin{aligned}
&\lim_{N\to\infty}s(N,M_1,\cdots, M_l;p_0,\cdots, p_l)=\\
&\lim_{N\to\infty}\frac{\sum_{t=1}^d\frac1{1-p_{i_t}}\big(\prod_{k=1}^{i_t}(\frac{p_{k-1}}{1-p_{k-1}})^{M_k-M_{k-1}}\big)
\sum_{j=M_{i_t}}^{M_{i_t+1}-1}(2\frac jN-1)\binom Nj(\frac{p_{i_t}}{1-p_{i_t}})^{j-M_{i_t}}}{\sum_{t=1}^d\frac1{1-p_{i_t}}
\big(\prod_{k=1}^{i_t}(\frac{p_{k-1}}{1-p_{k-1}})^{M_k-M_{k-1}}\big)\sum_{j=M_{i_t}}^{M_{i_t+1}-1}
\binom Nj(\frac{p_{i_t}}{1-p_{i_t}})^{j-M_{i_t}}}.
\end{aligned}
\end{equation}

In order to evaluate \eqref{Lapmult}, we need to analyze more closely the behavior of these $d$ summands from the denominator
of \eqref{Lapmult}.
For this analysis, we consider the following setup.
Let $\{Y_n\}_{n=1}^\infty$ be a sequence of IID Ber$(\frac12)$ random variables ($P(Y_n=1)=P(Y_n=0)=\frac12$),
 and let $S_n=\sum_{k=1}^n Y_k$.
For $p\in\{p_{i_1},p_{i_2},\cdots, p_{i_d}\}$, let $\{Y^{*,p}_n\}_{n=1}^\infty$ be a sequence of IID Ber$(p)$ random variables
($P(Y^{*,p}=1)=1-P(Y^{*,p}=0)=p$) defined on the same space
as $\{Y_n\}_{n=1}^\infty$ (so that we can use the same $P$ for probabilities and the same $E$ for  expectations) and let
$S^{*,p}_n=\sum_{k=1}^n Y^{*,p}_k$.
As is well-known,
\begin{equation}\label{RN}
P(S_N^{*,p}\in \cdot)=\frac{E\left((\frac p{1-p})^{S_N}; S_N\in \cdot\right)}{E(\frac p{1-p})^{S_N}}.
\end{equation}
To see this, note  that the Radon-Nikodym derivative $F_1(y)$  of the Ber$(p)$ distribution with respect to the the Ber$(\frac12)$ distribution
is given by $F_1(0)=2(1-p)$ and $F_1(1)=2p$, and consequently, the Radon-Nikodym derivative $F_N(y)$ of the $N$-fold product
of the Ber$(p)$ distribution with respect to the $N$-fold product of the Ber$(\frac12)$ distribution is given by $F_N(y_1,\cdots, y_N)=2^Np^{s_N}(1-p)^{N-s_N}$,
where $s_N=\sum_{i=1}^Ny_i$, and $y_i\in\{0,1\}$, for $i=1,\cdots, N$.
Thus,
$$
P(S_N^{*,p}\in \cdot)=E(2^Np^{S_N}(1-p)^{N-S_N}; S_N\in\cdot)=(2(1-p))^NE(\frac p{1-p})^{S_N}; S_N\in \cdot),
$$ from which
\eqref{RN} follows since
\begin{equation}\label{mgf}
E(\frac p{1-p})^{S_N}=(E(\frac p{1-p})^{Y_1})^N=(\frac12\frac p{1-p}+\frac12)^N=(2(1-p))^{-N}.
\end{equation}

With this setup, we can analyze the summands in the denominator of \eqref{Lapmult}.
We have
\begin{equation}\label{sumexpress}
\begin{aligned}
&\sum_{j=M_{i_t}}^{M_{i_t+1}-1}
\binom Nj(\frac{p_{i_t}}{1-p_{i_t}})^{j-M_{i_t}}=2^{N}E\big((\frac{p_{i_t}}{1-p_{i_t}})^{S_N-M_{i_t}};M_{i_t}\le S_N\le M_{i_t+1}-1\big)=\\
&2^N(\frac{p_{i_t}}{1-p_{i_t}})^{-M_{i_t}}E(\frac{p_{i_t}}{1-p_{i_t}})^{S_N}P(M_{i_t}\le S_N^{*,p_{i_t}}\le M_{i_t+1}-1)=\\
&(\frac{p_{i_t}}{1-p_{i_t}})^{-M_{i_t}}(\frac1{1-p_{i_t}})^N\big(1+o(1)\big),\ \text{as}\ N\to\infty,
\end{aligned}
\end{equation}
where the last equality follows \eqref{mgf} and  from the fact that by the law of large numbers, $\frac{S_N^{*,p_{i_t}}}N$ converges almost surely to $p_{i_t}$
as $N\to\infty$, while
\begin{equation}\label{Mr}
\lim_{N\to\infty}\frac{M_{i_t}}N=r_{i_t}<p_{i_t}<r_{i_t+1}=\lim_{N\to\infty}\frac{ M_{i_t+1}}N.
\end{equation}
Let
\begin{equation}\label{J's}
L_{i_t}:=\big(\prod_{k=1}^{i_t}(\frac{p_{k-1}}{1-p_{k-1}})^{M_k-M_{k-1}}\big)\sum_{j=M_{i_t}}^{M_{i_t+1}-1}
\binom Nj(\frac{p_{i_t}}{1-p_{i_t}})^{j-M_{i_t}}.
\end{equation}
Then from \eqref{sumexpress} we have
\begin{equation}\label{longcalc}
\begin{aligned}
&L_{i_t}=
(\frac{p_{i_t}}{1-p_{i_t}})^{-M_{i_t}}(\frac1{1-p_{i_t}})^N\big(1+o(1)\big)\prod_{k=1}^{i_t}
(\frac{p_{k-1}}{1-p_{k-1}})^{M_k-M_{k-1}}=\\
&\Big((\frac{p_{i_t}}{1-p_{i_t}})^{-r_{i_t}}(\frac1{1-p_{i_t}}) \prod_{k=1}^{i_t}(\frac{p_{k-1}}{1-p_{k-1}})^{r_k-r_{k-1}}\Big)^N\times\\
&(\frac{p_{i_t}}{1-p_{i_t}})^{-M_{i_t}+Nr_{i_t}}(1+o(1))\prod_{k=1}^{i_t}(\frac{p_{k-1}}{1-p_{k-1}})^{(M_k-Nr_k)-(M_{k-1}-Nr_{k-1})}.
\end{aligned}
\end{equation}
Since we are assuming that \eqref{argmax} holds, the terms
$$
(\frac{p_{i_t}}{1-p_{i_t}})^{-r_{i_t}}(\frac1{1-p_{i_t}}) \prod_{k=1}^{i_t}(\frac{p_{k-1}}{1-p_{k-1}})^{r_k-r_{k-1}}=
\frac1{p_{i_t}^{r_{i_t}}(1-p_{i_t})^{1-r_{i_t}}}\prod_{k=1}^{i_t}(\frac{p_{k-1}}{1-p_{k-1}})^{r_k-r_{k-1}}
$$
are identical, for $t=1,\cdots, d$.
Since we are assuming that \eqref{argmaxcondition} holds, it then follows from
  \eqref{longcalc}
that
\begin{equation}\label{ratios}
\lim_{N\to\infty}\frac{L_{i_t}}{L_{i_s}}=\frac{\alpha_{i_t}}{\alpha_{i_s}},\ s,t\in\{1,\cdots, d\}.
\end{equation}

Similar to \eqref{sumexpress}, we have
\begin{equation}\label{withj}
\begin{aligned}
&\sum_{j=M_i}^{M_{i+1}-1}(2\frac jN-1)\binom Nj(\frac{p_i}{1-p_i})^{j-M_i}=\\
&2^N(\frac{p_{i_t}}{1-p_{i_t}})^{-M_{i_t}}E(\frac{p_{i_t}}{1-p_{i_t}})^{S_N}E\big((2\frac{S_N^{*,p_{i_t}}}N-1);M_{i_t}\le S_N^{*,p_{i_t}}\le M_{i_t+1}-1)\big).
\end{aligned}
\end{equation}
Using \eqref{sumexpress} and \eqref{withj} along with  \eqref{Mr} and the fact that  $\frac{S_N^{*,p_{i_t}}}N$ converges almost surely to $p_{i_t}$,
we have
\begin{equation}\label{Laplacemethod}
\begin{aligned}
&\lim_{N\to\infty}\frac{\sum_{j=M_i}^{M_{i+1}-1}(2\frac jN-1)\binom Nj(\frac{p_i}{1-p_i})^{j-M_i}}{\sum_{j=M_{i_t}}^{M_{i_t+1}-1}
\binom Nj(\frac{p_{i_t}}{1-p_{i_t}})^{j-M_{i_t}}}=\\
&\lim_{N\to\infty}\frac{E\big((2\frac{S_N^{*,p_{i_t}}}N-1);M_{i_t}\le S_N^{*,p_{i_t}}\le M_{i_t+1}-1\big)}{P(M_{i_t}\le S_N^{*,p_{i_t}}\le M_{i_t+1}-1)}=2p_{i_t}-1.
\end{aligned}
\end{equation}
Now \eqref{multi} follows from \eqref{Lapmult}, \eqref{J's}, \eqref{ratios} and \eqref{Laplacemethod}.
\hfill $\square$

\section{Proof of Theorem \ref{weakconvergence}}\label{weakconv}
Fix $m\ge1$. We need to prove \eqref{weakly}.
Recall from section \ref{auxiliary}  the auxiliary process $\{Z_n\}_{n=0}^\infty$ defined by
$Z_n=(X_{n+1}-X_n, X_{n+2}-X_{n+1},\cdots, X_{n+N}-X_{n+N-1})$, and recall its
 invariant measure $\mu_Z$, defined on $\{-1,1\}^N$, and  given explicitly in \eqref{explicitinvar}.
In this section we write $\{X_n^N\}_{n=0}^\infty$, $\{Z_n^N\}_{n=0}^\infty$ and   $\mu_{Z^N}$ to emphasize the dependence on $N$.

For $\delta>0$, let
$$
A_{N,i_0,\delta}=\{v\in \{-1,1\}^N:\frac{\#^+(v)}N\in(p_{i_0}-\delta,p_{i_0}+\delta)\}.
$$
From   \eqref{explicitinvar} and  \eqref{argmaxassump},
 along with the Laplace asymptotic method
used  in section \ref{prooflimitspeed2},
it follows that
\begin{equation}\label{to0}
\lim_{N\to\infty}\mu_{Z^N}(A_{N,i_0,\delta})=1.
\end{equation}
We have  $r_{i_0}<p_{i_0}<r_{i_0+1}$ since we are assuming  \eqref{argmaxassump}.
Fix once and for all $\epsilon>0$ sufficiently small so that $r_{i_0}<p_{i_0}-2\epsilon$ and $p_{i_0}+2\epsilon<r_{i_0+1}$.
By the ergodic theorem, we can choose $\{T_N\}_{N=1}^\infty$ so that
$$
\lim_{N\to\infty}|P(Z^N_{T_N}\in A_{N,i_0,\epsilon})-\mu_{Z^N}(A_{N,i_0,\epsilon})|=0.
$$
Thus, from \eqref{to0} we have
\begin{equation}\label{ergthm}
\lim_{N\to\infty}P(Z^N_{T_N}\in A_{N,i_0,\epsilon})=1.
\end{equation}

Since $m$ is fixed, it follows from the form of $Z^N$ that
\begin{equation}\label{sets}
\{Z^N_{T_N}\in A_{N,i_0,\epsilon}\}\subset
\cap_{j=0}^{m-1}\{Z^N_{T_N+j}\in A_{N,i_0,2\epsilon}\},\ \text{for sufficiently large}\ N.
\end{equation}
Since $\lim_{N\to\infty}\frac{M_i}N=r_i$, it follows from \eqref{sets} and the choice of $\epsilon$ that if $N$ is sufficiently large, then
the condition $\{Z^N_{T_N}\in A_{N,i_0,\epsilon}\}$ guarantees that
\begin{equation}\label{MM}
M_{i_0}\le \#^+(Z^N_{T_N+j})<M_{i_0+1},\  \text{for}\ j=0,\cdots, m-1.
\end{equation}

If $M_{i_0}\le \#^+(Z^N_{T_N+j})<M_{i_0+1}$ occurs, then from the connection between $Z^N$ and $X^N$ and the definition of $X^N$,
it follows that the random variable $X^N_{T_N+N+j+1}-X^N_{T_N+N+j}$ will be distributed according to the Ber$(p_{i_0})$ distribution,
for $j=0,\cdots, m-1$.
Note that the  random variables $\{X^N_{N+T_N+j+1}-X^N_{N+T_N+j}\}_{j=0}^{m-1}$ are independent of the random vector
 $Z^N_{T_N}$. From these facts and \eqref{sets}, we conclude that for sufficiently large $N$,
\begin{equation}\label{ifthen}
\begin{aligned}
&\text{if}\
Z^N_{T_N}\in A_{N,i_0,\epsilon},\ \text{then}\\
&P_N(X_{N+T_N+1}-X_{N+T_N}=e_1,\cdots,X_{N+T_N+m}-X_{N+T_N+m-1}=e_m)=\\
&p_{i_0}^{|\{i\in[m]:\thinspace e_i=1\}|}(1-p_{i_0})^{|\{i\in[m]:\thinspace e_i=-1\}|},\
\text{for}\ (e_1,\cdots, e_m)\in\{-1,1\}^m.
\end{aligned}
\end{equation}
Now \eqref{weakly} follows from \eqref{ifthen} and \eqref{ergthm}.
\hfill $\square$

\section{Proof of Theorem \ref{Gthm}}\label{lequalsN}
As noted before the statement of the theorem, $s(N;G)$ is given by Theorem \ref{speed2}, using
$l=N$, $M_i=i$, $1\le i\le N$, and $p_i=G(\frac iN)$, $0\le i\le N$.
We have
\begin{equation}\label{speedN}
s(N;G)=\frac{\sum_{i=0}^N\frac1{1-G(\frac iN)}\big(\prod_{k=1}^i\frac{G(\frac{k-1}N)}{1-G(\frac{k-1}N)}\big)\binom Ni(\frac {2i}N-1)}
{\sum_{i=0}^N\frac1{1-G(\frac iN)}\big(\prod_{k=1}^i\frac{G(\frac{k-1}N)}{1-G(\frac{k-1}N)}\big)\binom Ni}.
\end{equation}
In order to apply the Laplace asymptotic method, we need to analyze the behavior of the  weights
$\big(\prod_{k=1}^i\frac{G(\frac{k-1}N)}{1-G(\frac{k-1}N)}\big)\binom Ni$ as $N\to\infty$.

Let
\begin{equation}\label{defF}
F_{N,i}=\prod_{k=1}^i\frac{G(\frac{k-1}N)}{1-G(\frac{k-1}N)}=\prod_{j=0}^{i-1}\frac{G(\frac jN)}{1-G(\frac jN)},\ \text{for}\ 0\le i\le N.
\end{equation}
For $\gamma\in(0,1)$, we have
\begin{equation}\label{Fasymp}
\begin{aligned}
&\log F_{N,[\gamma N]}=N\big(\frac1N\sum_{j=0}^{[\gamma N]-1}\log\frac{ G(\frac jN)}{1-G(\frac jN) }\big)=\\
&N\Big(\int_0^\gamma\log\frac{G(x)}{1-G(x)}dx+o(1)\Big),\ \text{as}\ N\to\infty.
\end{aligned}
\end{equation}
Thus,
\begin{equation}\label{Fbehav}
F_{N,[\gamma N]}=
e^{N\cdot o(1)}e^{N\int_0^\gamma\log\frac{G(x)}{1-G(x)}dx},\
\text{as}\ N\to\infty.
\end{equation}

Stirlings' formula gives
\begin{equation}\label{Stirling}
\binom N{[\gamma N]}=\frac1{\sqrt{\gamma(1-\gamma)N}}\big(\frac1{\gamma^\gamma(1-\gamma)^{1-\gamma}}\big)^Ne^{O(1)},\ \text{as}\ N\to\infty,
\end{equation}
with $e^{O(1)}$ uniform over $\gamma$ in compact subintervals of $(0,1)$.
Also, if $\lim_{N\to\infty}\gamma_N=0$, then for some universal constants $c,C>0$,
$$
\begin{aligned}
&\binom N{[\gamma_N N]}\le \frac{N^{\gamma_NN}}{[\gamma_NN]!}\le\frac{cN^{\gamma_NN}}{(\gamma_NN-1)^{\gamma_NN-1}e^{-\gamma_NN}}\le
\frac{CN^{\gamma_NN+1}e^{\gamma_NN}}{(\gamma_NN)^{\gamma_NN}}=\\
&CNe^{\gamma_NN}\gamma_N^{-\gamma_NN}.
\end{aligned}
$$
The logarithm of the right hand side above is $o(N)$ while the logarithm of the right hand
side of \eqref{Stirling} is on the order $N$; thus, for any $\gamma\in(0,1)$, $\binom N{[\gamma N]}$
is on a larger order as $N\to\infty$ than is $\binom N{[\gamma_N N]}$ with $\lim_{N\to\infty}\gamma_N=0$
or (in light of the symmetry) with  $\lim_{N\to\infty}\gamma_N=1$.

From \eqref{defF}, \eqref{Fbehav} and \eqref{Stirling}, we conclude that
\begin{equation}\label{weights}
\begin{aligned}
&\big(\prod_{k=1}^{[\gamma N]}\frac{G(\frac{k-1}N)}{1-G(\frac{k-1}N)}\big)\binom N{[\gamma N]}=
\frac{e^{O(1)}e^{N\cdot o(1)}}{\sqrt{\gamma(1-\gamma)N}}\big(\frac{e^{\int_0^\gamma\log\frac{G(x)}{1-G(x)}dx}}{\gamma^\gamma(1-\gamma)^{1-\gamma}}\big)^N,
\ \text{as}\ N\to\infty,
\end{aligned}
\end{equation}
with $o(1)$ independent of $\gamma$ and with $O(1)$ uniform over $\gamma$ in compact subintervals of $(0,1)$.
Let $H(\gamma):=\frac{e^{\int_0^\gamma\log\frac{G(x)}{1-G(x)}dx}}{\gamma^\gamma(1-\gamma)^{1-\gamma}}$,
for $\gamma\in(0,1)$. Note that the function $F$, defined in the statement of the theorem, satisfies $F=\log H$.
Since $H$  extends to a continuous, positive function on $[0,1]$, the function $F$ extends to a conintuous function on $[0,1]$.
We leave it to the reader to check that $F'(x)=0$ if and only if $G(x)=x$, and that $F'(x)>0$ for $x$ near 0 and $F'(x)<0$ for $x$ near
1, because $G(0)>0$ and $G(1)<1$.
The claims about $F$ in the statement
of the theorem now follow. If $F$ attains its maximum uniquely at $p^*$,  then in light of
 \eqref{speedN}, \eqref{weights} and the paragraph between \eqref{Stirling} and \eqref{weights},
 the Laplace asymptotic
method gives \eqref{speed*}.
\hfill $\square$

\end{document}